\numberwithin{equation}{section}
\theoremstyle{plain}
\newtheorem{thm}{Theorem}[section]
\newtheorem{lem}[thm]{Lemma}
\newtheorem{prop}[thm]{Proposition}
\newtheorem{conj}[thm]{Conjecture}
\newtheorem{propdef}[thm]{Proposition-Definition}
\theoremstyle{definition}
\newtheorem{defn}[thm]{Definition}
\newtheorem{ex}[thm]{Example}
\theoremstyle{remark}
\newtheorem{rem}[thm]{Remark}
\def\ZZ{\mathbb{Z}}
\newcommand{\N}{\mathbb N}
\newcommand{\R}{\mathbb R}
\newcommand{\CC}{\mathbb{C}}
\newcommand{\supp}[1]{\textrm{supp}({#1})}
\title[Nonisolated forms of rational triple points]{Nonisolated forms of rational triple point singularities of surfaces and their resolutions}
\author{A. Alt{\i}nta{\c s}}
\address{Department of Mathematics, Y{\i}ld{\i}z Technical University\\ Esenler 34210, Istanbul, Turkey} 
 \email{aysealtintas@gmail.com}
\author{G. {\c C}ev{\.i}k}
\address{Department of Mathematics, Ko{\c c} University\\ Sar{\i}yer 34450, Istanbul, Turkey}
\email{gulencevik@gmail.com}
\thanks{The second and third authors were supported by the project 109T667 under the program 1001 of the Scientific and Technological Research Council of Turkey.}
\author{M. Tosun}
\address{Department of Mathematics, Galatasaray University\\  Ortak{\"o}y 34357, Istanbul, Turkey}
\email{mtosun@gsu.edu.tr}
\subjclass[2000]{58K20}
\begin{document}

\maketitle
\tableofcontents

\section{Introduction}

Let $S$ be a two-dimensional normal analytic space embedded in $\mathbb C^N$ having an isolated singularity at the origin. Let  $\pi\colon \tilde{S}\longrightarrow S$ be a resolution of $S$. The singularity of $S$ is called rational if $H^1(\tilde{S},{\mathcal O}_{\tilde{S}})=0$. This condition implies very nice combinatorial results on dual resolution graphs of rational singularities (\cite{Ar}).  For example, the multiplicity of a rational singularity equals $-Z^2$ where $Z$ is the Artin's divisor (see Section 2) supported on the minimal resolution graph of the singularity.  Moreover, a rational singularity of multiplicity $m$ can be given by $m(m-1)/2$ equations with linearly independent quadratic terms (\cite{wahl}). The rational singularities of multiplicity $2$ are famously known as rational double (\textit{RDP}) or Du Val singularities (see, for example, \cite{barth}). In \cite{Ar}, M. Artin gave the complete list of the minimal resolution graphs of rational singularities of surfaces of multiplicity $3$ (\textit{rational triple point singularities} or \textit{RTP-singularities}, for short). We will recall those graphs in Table \ref{tablo-res}. The list of minimal resolution graphs for multiplicity $4$ and $5$ were given  in \cite{stevens1}  and \cite{t-o-o} respectively.  Those graphs were classified by using the combinatorics of the dual resolution graphs. The classification problem of rational singularities by their minimal graphs was studied deeply in  \cite{le-tosun} and \cite{stevens2}.
 
In \cite{tjurina}, Tjurina proved that rational singularities of surfaces are absolutely isolated, i.e. can be resolved by blowing up without normalisation, and gave a list of explicit equations defining the RTP-singularities. Her construction is based on the fact that a subgraph of a resolution graph of a rational singularity is still a resolution graph of a rational singularity (\cite{Ar}).  According to \cite{tjurina},  a surface  having an RTP-singularity is defined by $3$ equations in $\mathbb{C}^4$ (see also Section \ref{sec-rtp}).  So, they are neither hypersurface singularities nor complete intersection singularities. This makes them one of the most interesting objects in Singularity Theory/Algebraic geometry as they provide examples in a better understanding of other singularities of surfaces.

In this work, we study the equations defining RTP-singularities and give a new construction of their minimal resolution graphs. Our presentation is divided into three main sections. After recalling some basic facts about rational singularities of surfaces, we recall a global construction of triple covers from algebraic geometers point of view in Section \ref{sec-mir}. Using the fact that any normal
surface singularity is the normalisation of a nonisolated hypersurface singularity, we obtain explicit equations of some nonisolated hypersurfaces in $\CC^3$ whose normalisations give the RTP-singularities. Since the normalisations of our equations exactly correspond to the ones listed by Tjurina  (see Proposition \ref{prop-hyper}) we will call them {\it nonisolated forms of RTP-singularities}. 

A list for nonisolated forms of RTP-singularities were also obtained in \cite{tan-etal} by a different construction and some of the equations (such as $A_{k-1,\ell-1,m-1}$, $C_{\ell+1,k-1}$ and $F_{k-1}$) differ from ours.  Their construction is based on \cite{tan} where the author studied triple covers $Y\rightarrow X$ by global data on $X$ using the classical method of solving cubic equations and presented conditions for the cover to be smooth with smooth branch locus and other properties. In the case of surfaces, that technique provides a resolution of singularities of both the branch locus and of $Y$. This method is in fact called the Jung's resolution of singularities, studied in \cite{laufer} and \cite{le-bondil}. 

The cubic equations of nonisolated forms of RTP-singularities listed here may not have the simplest forms but are obtained by the suitable projections for our purposes in Section \ref{sec-rtp}. There, we construct an abstract graph from a arbitrary polygon in $\R^3$ by a regular subdivision and show that it may not correspond to a resolution graph of a singularity if it is not a Newton polygon (see Section \ref{sect-algo} and Remark \ref{rem-nonnew}). Then we construct the resolution graphs of RTP-singularities using the Newton polygons of those cubic equations. This method is given in \cite{oka} in the case of non-degenerate complete intersection singularities. Here, we simplify the method (for example, Definition 5.17 which comes from Tropical Geometry), and refer to it as {\it  Oka's resolution process}. Even though many results in \cite{oka} concern complete intersection singularities, some of them contain the ``isolated singularity" hypothesis (e.g.  \cite[Theorem 6.2]{oka}) and no nonisolated examples were presented there. The equations that we give here are the first examples in the literature of nonisolated hypersurface singularities for which Oka's resolution process works. 

In the final part, we show that both normal equations and nonisolated forms are non-degenerate which means that they can be resolved by toric modifications associated with the regular subdivison of the corresponding Newton polygon.  This fact was shown in \cite{varchenko} for isolated hypersurface singularities and generalised in \cite{oka}. In Appendix, we recall a more general definition of non-degeneracy given in \cite{aroca}, where it was proved that all non-degenerate singularities  can be resolved by toric modifications, to show that the RTP-singularities are non-degenerate. This interesting property leads us to ask whether a singularity is non-degenerate if and only if its normalisation is non-degenerate.

\section{Preliminaries on Rational Singularities}

Assume that $(S,0)$ is a normal surface singularity embedded in $({\CC}^n,0)$ which means that the local ring ${\mathcal O}_{S,0}$ is normal. A \textit{resolution} of $(S,0)$ is a proper map $\pi\colon (\tilde{S}, E)\longrightarrow (S,0)$ such that $\tilde{S}$ is a nonsingular surface, $E:=\pi^{-1}(0)$ and the restriction of $\pi$ to $\pi^{-1}(S-{0})$ is an isomorphism.  The fibre $E$ is called  {\it the exceptional divisor} of $\pi $ which is, by the Zariski's Main theorem (\cite[Theorem V.5.2]{hartshorne}), a connected curve. A resolution $\pi$ is called {\it minimal} if any other resolution of $(S,0)$ factorizes via $\pi$. The minimal resolution exists and is unique. 
 
If the singularity $(S,0)$ is not isolated then first we apply a \textit{normalisation} $n\colon (\bar{S},0) \rightarrow (S,0)$ where $\bar{S}$ is a normal surface, $n$ is a finite and proper map. 

\begin{thm}[\cite{zariski}]
Any reduced complex surface admits a resolution.
\end{thm}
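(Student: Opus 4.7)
The plan is to follow the classical Jung method, which is thematically aligned with the toric and Hirzebruch--Jung framework used later in the paper. First, I would reduce to the case of a normal surface with only isolated singularities by applying a normalisation $n\colon \bar S \to S$ on each irreducible component; this is a finite proper birational map, and a normal surface singularity is automatically isolated since the non-normal locus of a surface is of pure codimension one. It therefore suffices to resolve each germ $(\bar S, p)$ of a normal surface singularity separately.

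Next, after a generic linear change of coordinates in the ambient space, Noether normalisation yields a finite projection $\pi\colon (\bar S, p) \to (\CC^2, 0)$. Let $\Delta \subset (\CC^2, 0)$ denote its discriminant curve. Using the fact that plane curve singularities admit embedded resolutions by iterated point blowups, with termination controlled by the strict decrease of the $\delta$-invariant (equivalently the multiplicity sequence), one obtains a proper birational morphism $\sigma\colon V \to \CC^2$ such that $\sigma^{-1}(\Delta)$ is a normal crossings divisor. One then forms the fibre product $\bar S \times_{\CC^2} V$ and takes its normalisation $\tilde S_0$, producing a proper map $\tilde S_0 \to \bar S$ that is an isomorphism away from $\pi^{-1}(\Delta)$.

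Since the branch locus of the induced cover $\tilde S_0 \to V$ lies inside $\sigma^{-1}(\Delta)$, which is a normal crossings divisor, every singular point of $\tilde S_0$ is \emph{quasi-ordinary}. For surfaces these coincide with the cyclic quotient, or Hirzebruch--Jung, singularities $\CC^2/\mu_n$ with action $(x,y) \mapsto (\zeta x, \zeta^q y)$, which are resolved explicitly by the toric modification determined by the Hirzebruch--Jung continued fraction expansion of $n/q$. Composing the curve resolution $\sigma$, the normalised base change, and these toric resolutions of the quasi-ordinary points gives a resolution $\tilde S \to \bar S \to S$, proving the theorem.

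The main obstacle, in my view, is the justification that the singularities of the normalised pullback are genuinely quasi-ordinary: one must analyse $\pi$ locally above a normal crossings point of $\Delta$, check that the local fundamental group of the complement is abelian so that the cover is determined by characters of $\ZZ^2$, and verify that taking the normalisation yields precisely a cyclic quotient singularity. The termination of the embedded resolution of $\Delta$ is standard but should be invoked carefully, and one must also check that the global construction descends compatibly over all the (finitely many) singular points of $S$.
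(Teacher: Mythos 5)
The paper does not prove this statement at all: it is quoted as a classical theorem with a citation to Zariski, so there is no internal proof to measure your argument against. Your proposal is a correct outline of a genuinely different (and standard) route, namely Jung's method: normalise, project finitely to $(\CC^2,0)$, resolve the discriminant curve to normal crossings by point blowups, pull back and normalise, and resolve the resulting cyclic quotient (Hirzebruch--Jung) singularities torically. Zariski's own treatment proceeds instead by alternating normalisations with blowups and, in his later work, by valuation-theoretic local uniformisation, so the two approaches differ in their key mechanism; yours has the advantage of meshing with the machinery this paper actually uses later (Jung-type resolution, toric modifications, and the Hirzebruch--Jung continued fractions that appear in Oka's weight formula), while Zariski's is intrinsic and does not require choosing a projection or controlling a discriminant. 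Two points to tighten: the reason a normal surface has only isolated singularities is Serre's criterion (normality forces regularity in codimension one), not a statement about the non-normal locus being of pure codimension one; and the step you rightly flag as the crux --- that the normalised pullback over a normal crossings branch locus has only cyclic quotient singularities, via the abelian local fundamental group $\pi_1\cong\ZZ^2$ of the complement of two transverse branches --- is exactly the lemma that must be proved (or cited, e.g.\ from Laufer's book) for the argument to be complete; with that supplied, your sketch is a valid proof of the theorem.
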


\begin{defn} A surface singularity is called rational if $H^1(\tilde{S},{\mathcal O}_{\tilde{S}})$=0. 
\end{defn}

Note that this characterisation of the rational singularities is independent of the choice of the resolution. The exceptional divisor of a resolution of a rational singularity is a normal crossing divisor of which each component $E_i$ is a nonsingular rational curve and its resolution graph is a tree (see, for example, \cite{tosun-turkish}).  Moreover,  by \cite{tjurina}, rational singularities can be resolved by a finite number blowing-ups (without normalisation). We also have a combinatorial description of rational singularities.

\begin{thm} [\cite{Ar}]
\label{art1}
 A normal surface singularity $(S,0)$ is rational if and only if $p_a(Y)\leq 0$ for any resolution $\pi\colon (\tilde{S},E) \rightarrow (S,0)$ where $p_a(Y)$ is the arithmetic genus of the positive divisor $Y:=\sum a_iE_i$, (i.e. $a_i\geq 0$ for all $i$), supported on the exceptional divisor $E=\cup_{i=1}^{n} E_i$.
\end{thm}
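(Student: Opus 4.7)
The plan is to convert statements about $H^1(\tilde{S},\mathcal{O}_{\tilde{S}})$ into statements about $p_a$ of positive divisors supported on $E$, using the short exact sequence
$$0 \to \mathcal{O}_{\tilde{S}}(-Y) \to \mathcal{O}_{\tilde{S}} \to \mathcal{O}_Y \to 0,$$
together with the identity $p_a(Y)=1-\chi(\mathcal{O}_Y)$ and the adjunction formula $p_a(Y)=1+\tfrac{1}{2}(Y^2+Y\cdot K)$, where $K$ is the canonical divisor of $\tilde{S}$.

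For the easier direction ``rational $\Rightarrow$ $p_a(Y)\leq 0$'', I would fix a positive $Y$ supported on $E$ and take the long cohomology sequence above. Since $S$ may be taken Stein and the fibres of $\pi$ have dimension at most one, the Leray spectral sequence yields $H^i(\tilde{S},\mathcal{F})=0$ for $i\geq 2$ on any coherent sheaf $\mathcal{F}$; combined with the hypothesis $H^1(\tilde{S},\mathcal{O}_{\tilde{S}})=0$, this forces $H^1(Y,\mathcal{O}_Y)=0$. Since $Y$ is a nonempty effective divisor, $h^0(\mathcal{O}_Y)\geq 1$ and hence $p_a(Y)=1-h^0(\mathcal{O}_Y)\leq 0$.

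The converse is the harder part. The main tool is Grothendieck's theorem on formal functions, which identifies the completion of the stalk $(R^1\pi_*\mathcal{O}_{\tilde{S}})_0$ with the inverse limit $\varprojlim_n H^1(Y_n,\mathcal{O}_{Y_n})$ along a cofinal system of positive divisors $Y_n$ supported on $E$. It therefore suffices to show $H^1(Y,\mathcal{O}_Y)=0$ for every positive $Y$, and I would induct on the total multiplicity $\sum a_i$. The base case $Y=E_i$ is immediate: since $E_i$ is an irreducible curve one has $0\leq p_a(E_i)\leq 0$, hence $E_i\cong \mathbb{P}^1$ and $H^1(E_i,\mathcal{O}_{E_i})=0$. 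For the inductive step, writing $Y=Y'+E_i$ with $Y'$ positive and applying cohomology to
$$0 \to \mathcal{O}_{E_i}(-Y') \to \mathcal{O}_Y \to \mathcal{O}_{Y'} \to 0$$
reduces the problem, after using the inductive hypothesis on $Y'$, to $H^1(E_i,\mathcal{O}_{E_i}(-Y'))=0$; on $\mathbb{P}^1$ this is equivalent to the intersection bound $E_i\cdot Y'\leq 1$. Adjunction (together with $p_a(E_i)=0$) yields the linking identity
$$p_a(Y)=p_a(Y')+E_i\cdot Y'-1.$$

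The main obstacle is precisely arranging $E_i\cdot Y'\leq 1$: the bare hypothesis $p_a(Y)\leq 0$ is not enough, since $p_a(Y')$ may itself be very negative. The remedy, due to Artin, is to build a \emph{computation sequence} $0=Y_0<Y_1<\dots<Y_k=Y$ with $Y_{j+1}=Y_j+E_{i_{j+1}}$ chosen so that $E_{i_{j+1}}\cdot Y_j\leq 1$ at every step, and to run the induction along this particular sequence. Proving that such an ordering exists for an arbitrary positive $Y$, by exploiting the hypothesis $p_a\leq 0$ applied to \emph{every} intermediate cycle (and not only to $Y$ itself) in combination with the adjunction identity above, is the real technical heart of the theorem; once the sequence is in hand, the induction closes without further difficulty.
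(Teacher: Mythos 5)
The paper does not prove this statement at all---it is quoted from Artin's paper \cite{Ar}---so the comparison can only be with the classical argument, and your outline does follow that route. Your easy direction is complete: Leray over a Stein base plus one-dimensionality of the fibres kills $H^2(\tilde S,\mathcal{O}_{\tilde S}(-Y))$, so $H^1(Y,\mathcal{O}_Y)$ is a quotient of $H^1(\tilde S,\mathcal{O}_{\tilde S})=0$ and $p_a(Y)=1-\chi(\mathcal{O}_Y)\le 0$. The reduction of the converse, via the theorem on formal functions (in the analytic setting, Grauert's coherence and comparison theorems, so that vanishing of the completed stalk of $R^1\pi_*\mathcal{O}_{\tilde S}$ gives vanishing of the stalk), to the statement that $H^1(Y,\mathcal{O}_Y)=0$ for every positive cycle $Y$, and the inductive peeling $Y=Y'+E_i$ with the criterion $E_i\cdot Y'\le 1$ on $E_i\cong\mathbb{P}^1$, is exactly the standard proof.

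The genuine gap is the step you flag and then defer: the existence, for every positive cycle $Y$ with $\sum a_i\ge 2$, of a component $E_i$ of $Y$ with $E_i\cdot(Y-E_i)\le 1$. As written you only assert that a suitable ordering (``computation sequence'') exists; but this is precisely the only place where the hypothesis $p_a\le 0$ enters the induction, so without it the proof does not close. Fortunately the lemma is shorter than your phrasing suggests, and no globally pre-chosen ordering is needed---greedy peeling suffices. By your base case every $E_j$ is smooth rational, so $p_a(E_j)=0$; iterating the additivity formula $p_a(A+B)=p_a(A)+p_a(B)+A\cdot B-1$ over a decomposition of $Y$ into its $N=\sum a_i$ irreducible summands $C_1,\dots,C_N$ (listed with multiplicity) gives
\[
p_a(Y)=\tfrac12\sum_{k=1}^{N}C_k\cdot(Y-C_k)-(N-1).
\]
If every component $E_i$ occurring in $Y$ satisfied $E_i\cdot(Y-E_i)\ge 2$, then each summand would be at least $2$ and hence $p_a(Y)\ge N-(N-1)=1$, contradicting the hypothesis applied to $Y$ itself. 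So at each stage some component with $E_i\cdot(Y-E_i)\le 1$ exists, and your induction then concludes as described. With that lemma supplied, the argument is correct and is essentially Artin's original one.
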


Moreover, among all positive divisors $Y$ supported on the exceptional divisor $E$ such that $(Y\cdot E_i)\leq 0$ for all $i=1,\ldots ,n$ there exists a smallest divisor, called \textit{Artin's divisor} of the resolution
$\pi $ and denoted by $Z$.  We have $Y:=\sum a_iE_i \geq Y':=\sum a'_iE_i$  if $a_i\geq a'_i$ for all $i=1,\ldots ,n$  with $a_i\geq 0$ and $a'_i\geq 0$. One of the information we get from Artin's divisor is the multiplicity of  the corresponding rational singularity. 

The \textit{multiplicity} of $S$ at $0$ is defined as the number of intersection points of $S$ by a generic affine space of codimension $2$ closed to the origin. It plays a key role in the study of singularities. In the case of rational singularities, it can be read from the resolution.

\begin{prop}[\cite{Ar}]
\label{art2}
Let $(S,0)$ be a rational singularity of multiplciy $m$. Then $Z^2=-m$ and the embedding dimension equals $m+1$.
\end{prop}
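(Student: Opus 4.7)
The plan is to transport every computation from $(S,0)$ to the resolution $\tilde S$ through the single identification $\mathfrak m\, \OO_{\tilde S} = \OO_{\tilde S}(-Z)$, and then exploit the rationality hypothesis via the vanishing $H^1(\tilde S, \OO_{\tilde S}(-kZ)) = 0$ for $k \geq 0$ to reduce everything to Riemann--Roch on the fundamental cycle $Z$.

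The opening and most delicate step is to prove this identification. Since $Z \cdot E_i \leq 0$ for every exceptional component by definition of Artin's divisor, a component-by-component induction along $E$ combined with $H^1$-vanishing shows that $\OO_{\tilde S}(-Z)$ is generated by its global sections. Matching these sections with pullbacks of elements of $\mathfrak m$ gives $\mathfrak m\, \OO_{\tilde S} = \OO(-Z)$, and taking powers yields $\mathfrak m^k \OO_{\tilde S} = \OO(-kZ)$ for all $k \geq 1$.

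For $Z^2 = -m$, I would pick two generic $f_1, f_2 \in \mathfrak m$ (for instance generic linear forms), so that their local intersection number at $0$ computes $m$. On $\tilde S$ write $\pi^* f_i = \tilde f_i + Z$, where $\tilde f_i$ is the strict transform. The numerical triviality $\pi^* f_i \cdot E_j = 0$ yields $\tilde f_i \cdot Z = -Z^2$, hence
\[
m = (\pi^* f_1)\cdot(\pi^* f_2) = (\tilde f_1 + Z)\cdot(\tilde f_2 + Z) = \tilde f_1 \cdot \tilde f_2 - Z^2.
\]
Because $|\OO(-Z)|$ is base-point free by the first step, generic $f_1, f_2$ satisfy $\tilde f_1 \cdot \tilde f_2 = 0$, and we conclude $Z^2 = -m$.

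For the embedding dimension it suffices to compute $\dim_{\CC}\mathfrak m/\mathfrak m^2$. By step one this quotient equals $H^0(\OO(-Z))/H^0(\OO(-2Z))$, and the short exact sequence $0 \to \OO(-2Z) \to \OO(-Z) \to \OO_Z(-Z) \to 0$ combined with $H^1(\OO(-2Z))=0$ identifies it with $H^0(\OO_Z(-Z))$. Since $p_a(Z)=0$ for Artin's divisor in the rational case and $h^1(\OO_Z(-Z))=0$, Riemann--Roch on the cycle $Z$ gives
\[
\dim \mathfrak m/\mathfrak m^2 = \chi(\OO_Z(-Z)) = \chi(\OO_Z) + (-Z)\cdot Z = 1 + m,
\]
which is exactly the asserted embedding dimension $m+1$. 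The main obstacle is the opening identification $\mathfrak m\, \OO_{\tilde S} = \OO(-Z)$: this is the step where rationality actually bites, requiring both the global generation of $\OO(-Z)$ and the matching of its sections with pulled-back functions from $\mathfrak m$. Everything downstream is then a routine application of intersection theory on $\tilde S$ and Riemann--Roch on the exceptional cycle.
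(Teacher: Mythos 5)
The paper does not prove this statement at all: Proposition \ref{art2} is quoted directly from Artin's paper \cite{Ar}, so the only comparison available is with Artin's original argument. Your proposal is essentially a correct reconstruction of that argument: the key identification $\mathfrak{m}\mathcal{O}_{\tilde S}=\mathcal{O}_{\tilde S}(-Z)$ (and $\mathfrak{m}^k\mathcal{O}_{\tilde S}=\mathcal{O}_{\tilde S}(-kZ)$) via $H^1$-vanishing and global generation is exactly Artin's Theorem 4, and your computation of $\dim_{\CC}\mathfrak{m}/\mathfrak{m}^2=h^0(\mathcal{O}_Z(-Z))=1-p_a(Z)-Z^2=m+1$ is his Corollary on the embedding dimension. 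Where you genuinely diverge is the multiplicity: Artin computes the Hilbert--Samuel function $\dim\mathcal{O}_{S,0}/\mathfrak{m}^n=\chi(\mathcal{O}_{nZ})=-\tfrac{n^2}{2}Z^2-\tfrac{n}{2}Z\cdot K$ and reads off $e(\mathfrak{m})=-Z^2$ from the leading term, whereas you cut by two generic elements of $\mathfrak{m}$ and use intersection theory on $\tilde S$; both are legitimate, but your route hides an extra use of rationality that you attribute only to the first step. Indeed, the identity $m=\sum_{x\in E}(\pi^*f_1\cdot\pi^*f_2)_x$ is not formal: pushing the Koszul complex of $(\pi^*f_1,\pi^*f_2)$ down by $\pi$ and identifying $\dim\mathcal{O}_{S,0}/(f_1,f_2)$ with $h^0(\mathcal{O}_{\tilde S}/(\pi^*f_1,\pi^*f_2))$ requires $R^1\pi_*\mathcal{O}_{\tilde S}=0$, i.e.\ rationality again (the statement $m=-Z^2$ does fail for some non-rational singularities, so this cannot be cost-free). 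Two further small glosses you should flag: for the embedding-dimension step you need the module-level equalities $\mathfrak{m}=\pi_*\mathcal{O}(-Z)$ and $\mathfrak{m}^2=\pi_*\mathcal{O}(-2Z)$, i.e.\ surjectivity of $H^0(\mathcal{O}(-Z))\otimes H^0(\mathcal{O}(-Z))\to H^0(\mathcal{O}(-2Z))$, which again comes from $H^1$-vanishing and is part of Artin's Theorem 4, not merely of the sheaf-level identity; and $p_a(Z)=0$ uses both Artin's criterion ($p_a(Z)\le 0$) and the general inequality $p_a(Z)\ge 0$ for the fundamental cycle. With these points made explicit, your outline is a sound proof along Artin's lines.
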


Conversely, if a given graph is weighted by $(w_i,g_i)$ at each vertex such that it is a tree with $g_i=0$ for all $i$ and satisfies the assertions of  Theorem \ref{art1} and Proposition \ref{art2}, then it is a resolution graph of a rational singularity. Then $w_i$ and $g_i$ represent the numbers $-E_i^2$ and  the genus of the corresponding irreducible component $E_i$ in the exceptional fibre, respectively.

\section{Triple Covers after Miranda}\label{sec-mir}

In \cite{mir-triple}, Miranda showed that a set of data for a \textit{triple cover} $p\colon Y\rightarrow X$ between two algebraic schemes (over characteristic $\neq 2,3$) consists of a free $\mathcal{O}_X$-module $\mathcal{E}$ of rank $2$ (say, generated by $z$ and $w$) such that $p_*\mathcal{O}_Y\cong \mathcal{O}_X\oplus \mathcal{E}$ and a morphism $\phi\colon S^2\mathcal{E}\rightarrow \mathcal{O}_X\oplus \mathcal{E}$ given by
\begin{eqnarray}\label{phi3}  \phi(z^2)&=&2(a^2-bd)+az+bw, \nonumber \\
 \phi(zw)&=&-(ad-bc)-dz-aw, \nonumber \\
 \phi(w^2)&=& 2(d^2-ac)+cz+dw \nonumber \end{eqnarray} where $S^2\mathcal{E}$ is the second symmetric power of $\mathcal{E}$ and $a,b,c,d\in \mathcal{O}_X$ with $bc\neq 0$. Here we remark that Miranda's construction also works for the case where $X$ and $Y$ are germs of analytic varieties even if $\mathcal{E}$ might fail to be a free $\mathcal{O}_X$-module. 

Let $p\colon Y\rightarrow X$ be a covering map of degree $3$ between two analytic varieties $X$ and $Y$ with $p_*\mathcal{O}_Y\cong \mathcal{O}_{X}\cdot \{1,z,w\}$. Then one can write
\begin{eqnarray}\label{phi1} z^2&=&g+az+bw, \nonumber \\
zw&=&h+ez+fw, \\
w^2&=& i+cz+dw \nonumber \end{eqnarray}
for $a,b,\ldots, i\in \mathcal{O}_X$.  Multiplying the equations in (\ref{phi1}) by $w$, $z$ and $w$ respectively we get
\begin{equation} \label{eqasso} g=be+f^2-af-bd,\hskip6pt h=bc-ef, \hskip6pt i=e^2+cf-ac-de \end{equation} 
since $z\cdot zw = w\cdot z^2$ and $z\cdot w^2=w\cdot zw$ (cf. \cite[Lemma 2.4]{mir-triple}). By the set up, no cubic polynomial in $z$ and $w$ has a square term, $z^3$ is generated by $1$ and $z$ in $\mathcal{O}_X$ and similarly, $w^3$ by $1$ and $z$ in $\mathcal{O}_X$. So, (\ref{phi1}) and (\ref{eqasso}) give
\begin{eqnarray*} z^3&=& ag+bh+(g+a^2+be)z+(ab+bf)w ,\\
w^3&=& ch+di+(ce+cd)z+(i+cf+d^2)w.
\end{eqnarray*}
Therefore, $ab+bf=0$ and $ce+cd=0$ on $\mathcal{O}_X$ which yield $f=-a$ and $e=-d$. Because,  when $b=0$ (resp. $c=0$) we have $z^2=g+az$ (resp. $w^2=i+dw$); this contradicts the fact that the field of fractions $K_Y$ over $\mathcal{O}_Y$ is an extension of $K_X$ of degree 3  (cf. \cite[Lemma 2.6]{mir-triple}).

Now, let us consider a triple cover $p\colon Y \rightarrow X$ where $X$ is smooth and $Y$ is defined by
\begin{eqnarray}\label{defY} F(z,w)&:=&z^2-2(a^2-bd)-az-bw, \nonumber \\
G(z,w)&:=&zw+(ad-bc)+dz+aw, \\
H(z,w)&:=&w^2-2(d^2-ac)-cz-dw \nonumber \end{eqnarray}
with $a,b,c,d\in \mathcal{O}_X$.

\begin{prop}  (see also \cite{wahl}) With preceding notations, the embedding of $Y$ into $\mathbb{C}^2\times X$ given by (\ref{defY}) is determinantal.
\end{prop}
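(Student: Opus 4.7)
The plan is to exhibit a $2\times 3$ matrix $M$ with entries in $\mathcal{O}_X[z,w]$ whose three $2\times 2$ minors generate the same ideal as $F$, $G$, $H$; this is precisely what ``determinantal'' means here and matches Wahl's description of rational triple point equations. By the Hilbert--Burch theorem, any codimension-two Cohen--Macaulay ideal admits such a presentation, so the real task is to make the matrix explicit.

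The first step is to look for linear factorizations of the pure $z$-quadratic part of $F$ and the pure $w$-quadratic part of $H$. The identity $(z+a)(z-2a) = z^2 - az - 2a^2$ exactly absorbs the $z$-terms of $F$, and $(w+d)(w-2d) = w^2 - dw - 2d^2$ does the same for the $w$-terms of $H$. Using these, together with an analogous grouping for $G$, one rewrites
\begin{align*}
F &= (z-2a)(z+a) - b(w-2d),\\
G &= (z+a)(w+d) - bc,\\
H &= (w+d)(w-2d) - c(z-2a).
\end{align*}

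The second step is to arrange the six ``atoms'' $z-2a$, $z+a$, $w-2d$, $w+d$, $b$, $c$ as entries of a $2\times 3$ matrix so that the three $2\times 2$ minors reproduce the factorizations above. Tracking which pair of atoms is multiplied in each equation singles out the candidate
$$
M \;=\; \begin{pmatrix} z-2a & b & w+d \\ w-2d & z+a & c \end{pmatrix},
$$
whose $2\times 2$ minors on columns $(1,2)$, $(1,3)$, $(2,3)$ expand directly to $F$, $-H$, $-G$ respectively.

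Consequently the ideal $(F,G,H)$ equals the ideal of $2\times 2$ minors of $M$, so the embedding of $Y$ in $\mathbb{C}^2\times X$ is determinantal. The only nonroutine step is guessing $M$; once the factorizations of $F$, $G$, $H$ are in hand, checking that the minors of $M$ reproduce the three equations is a direct calculation.
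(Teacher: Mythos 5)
Your proof is correct and follows essentially the same route as the paper: you exhibit a $2\times 3$ matrix whose $2\times 2$ minors are $F$, $-H$, $-G$, and your matrix is exactly the paper's matrix (\ref{mirmat1}) up to swapping the rows and permuting the columns. The only cosmetic difference is that the paper also records explicitly the codimension condition $\operatorname{codim}(Y)=2=(2-2+1)(3-2+1)$ required by the definition of determinantal, which you invoke only implicitly via the Hilbert--Burch remark.
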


\begin{proof} Recall that a (germ of an analytic) variety $V\subseteq \mathbb{C}^N$ is said to be \textit{determinantal} if its defining ideal is generated by the $(t\times t)$-minors of an $(r\times s)$-matrix over $\mathcal{O}_{\mathbb{C}^N,0}$ for $0<t\leq r\leq s$ and $\textnormal{codim}(V)=(r-t+1)(s-t+1)$. The affirmation easily follows since the codimension of $Y$ in $\mathbb{C}^2\times X$ is $2$  and the polynomials $F,G,H$ above can be written as the $(2\times 2)$-minors of the matrix
\begin{equation}\label{mirmat1}\begin{bmatrix} z+a & w-2d & c \\ b & z-2a & w+d \end{bmatrix}.\end{equation}
\end{proof}

It is easy to see that  the variety defined by the $2\times 2$-minors of (\ref{mirmat1}) is  isomorphic to the one defined by the maximal minors of the matrix
\begin{equation}\label{mirmat2}\begin{bmatrix} z & w-3d & c \\ b & z-3a & w \end{bmatrix} \end{equation}
under the transformation $(z,w)\mapsto (z+a,w+d)$. In what follows we will refer to either of them as \textit{Miranda's matrix form}.  Furthermore, we will take $X=(\mathbb{C}^2,0)$ and show that $Y$ corresponds to an RTP-singularity for the appropriate choices of $a,b,c,d\in \mathcal{O}_{\mathbb{C}^2,0}$. 

\section{Graphs of RTP-singularities}\label{sec-rtp}

An  \textit{RTP-singularity} is a surface singularity which is rational with multiplicity $3$. The RTP-singularities are of 9 types and defined by 3 equations in ${\mathbb C}^4$. The explicit equations were first calculated by Tjurina in \cite{tjurina} using the minimal resolution graphs given by Artin in \cite{Ar}.
\begin{table}[h!]
\caption{The minimal resolution graphs of RTP-singularities}
{\label{tablo-res}
 	\begin{tabular} {ccc}
	\hline
\resizebox{201pt}{74pt}{\input{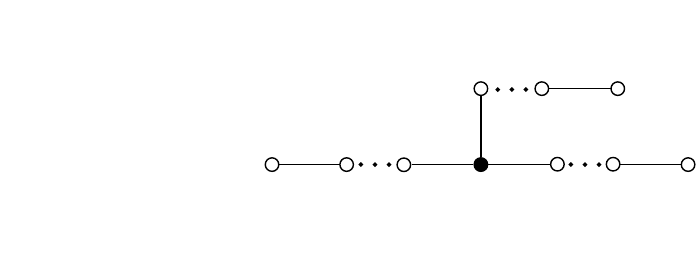_t}} & &
\resizebox{200pt}{51pt}{\input{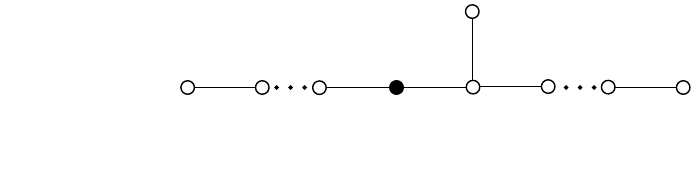_t}}	\\
\noalign{\smallskip}
\resizebox{203pt}{50pt}{\input{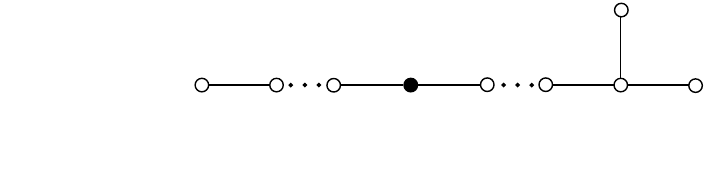_t}}	 & &
\resizebox{201pt}{50pt}{\input{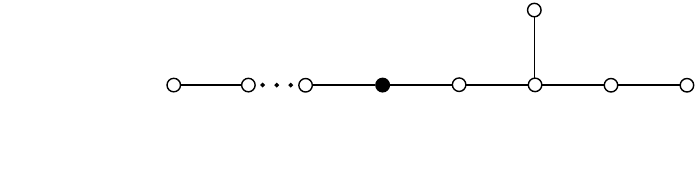_t}}	\\
\noalign{\smallskip}
\resizebox{202pt}{33pt}{\begin{picture}(0,0)%
\includegraphics{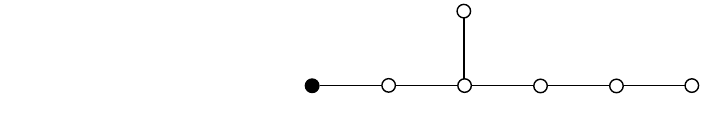}%
\end{picture}%
\setlength{\unitlength}{3552sp}%
\begingroup\makeatletter\ifx\SetFigFont\undefined%
\gdef\SetFigFont#1#2#3#4#5{%
  \reset@font\fontsize{#1}{#2pt}%
  \fontfamily{#3}\fontseries{#4}\fontshape{#5}%
  \selectfont}%
\fi\endgroup%
\begin{picture}(3734,594)(-1814,686)
\put(-1799,764){\makebox(0,0)[lb]{\smash{{\SetFigFont{11}{13.2}{\rmdefault}{\mddefault}{\updefault}{\color[rgb]{0,0,0}$E_{6,0}\colon$}%
}}}}
\end{picture}%
}	& &
\resizebox{199pt}{33pt}{\begin{picture}(0,0)%
\includegraphics{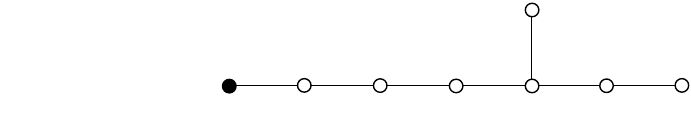}%
\end{picture}%
\setlength{\unitlength}{3552sp}%
\begingroup\makeatletter\ifx\SetFigFont\undefined%
\gdef\SetFigFont#1#2#3#4#5{%
  \reset@font\fontsize{#1}{#2pt}%
  \fontfamily{#3}\fontseries{#4}\fontshape{#5}%
  \selectfont}%
\fi\endgroup%
\begin{picture}(3681,600)(-1364,686)
\put(-1349,764){\makebox(0,0)[lb]{\smash{{\SetFigFont{11}{13.2}{\rmdefault}{\mddefault}{\updefault}{\color[rgb]{0,0,0}$E_{0,7}\colon$}%
}}}}
\end{picture}%
}	\\
\noalign{\smallskip}
\resizebox{200pt}{33pt}{\begin{picture}(0,0)%
\includegraphics{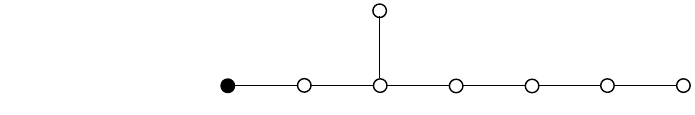}%
\end{picture}%
\setlength{\unitlength}{3552sp}%
\begingroup\makeatletter\ifx\SetFigFont\undefined%
\gdef\SetFigFont#1#2#3#4#5{%
  \reset@font\fontsize{#1}{#2pt}%
  \fontfamily{#3}\fontseries{#4}\fontshape{#5}%
  \selectfont}%
\fi\endgroup%
\begin{picture}(3689,596)(-1364,686)
\put(-1349,764){\makebox(0,0)[lb]{\smash{{\SetFigFont{11}{13.2}{\rmdefault}{\mddefault}{\updefault}{\color[rgb]{0,0,0}$E_{7,0}\colon$}%
}}}}
\end{picture}%
}	& &
\resizebox{205pt}{50pt}{\input{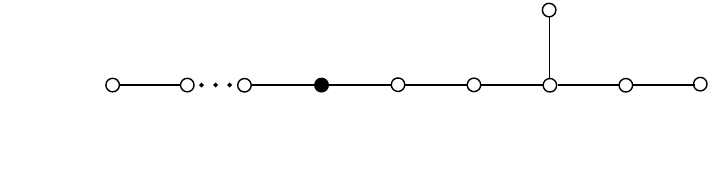_t}}	\\
\noalign{\smallskip}
\resizebox{201pt}{49pt}{\input{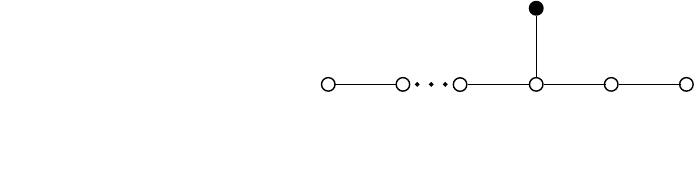_t}}	& & \\\hline \end{tabular}} \end{table}

The classification of Artin is listed in Table \ref{tablo-res} where the labels $E_{6,0}$, $E_{0,7}$ and $E_{7,0}$ are taken from \cite{tan-etal} and the rest of them from Tjurina's work.  
The equations given by Tjurina are beautiful examples to Miranda's construction of triple covers. Those equations can be obtained by taking $2\times 2$-minors of the matrices listed in the second column of Table \ref{Tlist} below. The calculations required to transform the matrices into Miranda's form, which are shown in the third column of Table \ref{Tlist}, are given in Appendix B. 

\begin{rem} There is a direct way to calculate the minimal resolution graphs of RTP-singularities from Tjurina's equations due to Tjurina herself. Namely, let $(S,0)$ be an RTP-singularity given by the maximal minors of the matrix 
$$\begin{bmatrix} f_1 & f_3 & f_5 \\ f_2 & f_4 & f_6 \end{bmatrix} $$ and consider the embedding $S'\subset \mathbb{C}^4\times\mathbb{P}^1$ 
defined by the equations
$$tf_1=sf_2, tf_3=sf_4,\ tf_5=sf_6$$
where $(s : t)$ are homogeneous coordinates in $\mathbb{P}^1$ (\cite[\S 2]{tjurina}). The surface $S'$ is called the \textit{Tjurina modification} of $(S,0)$ after \cite{tjurina}. It is locally a complete intersection singularity and all of its singularities are rational. The map 
$\varphi\colon (S',E_0) \rightarrow (S,0)$, induced by the projection $\mathbb{C}^4\times\mathbb{P}^1\rightarrow \mathbb{C}^4$, is birational and its fibre above the singular point $0$ is the \textit{central curve} $E_0\cong \mathbb{P}^1$ which corresponds to the exceptional curve with self intersection $-3$ in the minimal resolution graph. Moreover, the RDP-singularities connected to $E_0$ in the minimal resolution graph are the same type of singularities the surface $(S',E_0)$ has at its singular points along $E_0$. Hence, one can deduce the minimal resolution graph of $(S,0)$ by successive blow-ups of $(S',E_0)$. 
\end{rem}


\begin{longtable}{>{\centering\arraybackslash}m{2cm}>{\centering\arraybackslash}m{4.5cm}>{\centering\arraybackslash}m{8cm}}
\caption{Equations of the RTP-singularities}
\label{Tlist} \\
\hline 
RTP  & Matrices of Tjurina's equations & Matrices in Miranda's form
\\\hline
	\endfirsthead
		\multicolumn{3}{c}%
{{\tablename\ \thetable{}. -- continued from previous page}} \\
 \hline 
 RTP  & Matrices of Tjurina's equations & Matrices in Miranda's form
		\\\hline
 \endhead
 \hline\multicolumn{3}{r}{{Continued on next page}} \\ 
\endfoot
\hline 
\endlastfoot
\noalign{\smallskip}
$A_{k-1,\ell-1,m-1}$ & $\left[\begin{array}{ccc} z & w & y^m \\ y^k & w+y^\ell & x \end{array}\right]$  &
$\left[\begin{array}{ccc} z & w-(x+y^{\ell}+y^m) & y^m \\ -y^k & z-x+y^k & w \end{array}\right]$
\\[0.6cm] \hline
\noalign{\smallskip}
$B_{k-1,2\ell}$ &  $\begin{bmatrix} z & w+y^\ell & xy \\ y^k & x & w \end{bmatrix}$  &
$\begin{bmatrix} z & w+y^\ell  & xy  \\  y^k & z+x  & w-y^{k+1} \end{bmatrix}$\\[0.5cm]
$B_{k-1,2\ell-1}$ &  $\begin{bmatrix} z & w & xy+y^{\ell} \\ y^k & x & w \end{bmatrix}$  &
$\begin{bmatrix} z & w & xy \\ y^k & z+x-y^{\ell-1} & w-y^{k+1} \end{bmatrix}$\\[0.5cm] \hline
\noalign{\smallskip}
$C_{k-1,\ell+1}$& $\begin{bmatrix} z & w & x^\ell +y^2 \\ y^k & x & w \end{bmatrix}$ &
$\begin{bmatrix} z & w+\ell x^{\ell -1}y^k & x^\ell +y^2 \\ (1-{\ell \choose 3}x^{\ell-3}y^{2k})y^k & z+x & w \end{bmatrix}$ \\[0.5cm]  \hline
\noalign{\smallskip}
$D_{k-1}$ &  $\begin{bmatrix} z & w+y^2& x^2 \\ y^k & x & w \end{bmatrix}$  &
$\begin{bmatrix} z & w+y^2+2xy^k & x^2 \\ y^k & z+x-y^{2k} & w \end{bmatrix}$
\\[0.6cm] \hline
\noalign{\smallskip}
$E_{6,0}$ &  $\begin{bmatrix} z & w & x^2 \\ y & z & w+y^2\end{bmatrix}$ &
$\begin{bmatrix} z & w-y^2 & x^2 \\ y & z & w\end{bmatrix}$ \\[0.6cm] \hline
\noalign{\smallskip}
$E_{0,7}$  &  $\begin{bmatrix} z & w & x^2+y^3 \\ y & z & w\end{bmatrix}$ &
$\begin{bmatrix} z & w & x^2+y^3 \\ y & z & w\end{bmatrix}$\\[0.6cm] \hline
\noalign{\smallskip}
$E_{7,0}$ &  $\begin{bmatrix} z & w & y^2 \\ y & z & w+x^2\end{bmatrix}$  &
$\begin{bmatrix} z & w-x^2 & y^2 \\ y & z & w\end{bmatrix}$\\[0.6cm]  \hline
\noalign{\smallskip}
$F_{k-1}$ &  $\begin{bmatrix} z & w& x^2+y^3 \\ y^k & x & w \end{bmatrix}$ &
$\begin{bmatrix} z & w+2xy^k & x^2+y^3 \\ y^k & z+x-y^{2k} & w \end{bmatrix}$ \\[0.6cm] \hline
\noalign{\smallskip}
$H_{3k-1}$ & $\begin{bmatrix} z & w & xy+y^k \\ x & z & w \end{bmatrix}$ &
 $\begin{bmatrix} z & w & xy+y^k \\ x & z & w \end{bmatrix}$\\
\noalign{\smallskip}
$H_{3k}$ & $\begin{bmatrix} z & w & xy \\ x & z & w+y^k\end{bmatrix}$ &
$\begin{bmatrix} z & w-y^k & xy \\ x & z & w\end{bmatrix}$ \\
\noalign{\smallskip}
$H_{3k+1}$  &  $\begin{bmatrix} z & w+y^k & x \\ xy & z & w\end{bmatrix}$ &
$\begin{bmatrix} z & w+y^k & x \\ xy & z & w\end{bmatrix}$
\\[0.6cm] \hline
\end{longtable}

\subsection{Nonisolated forms of RTP-singularities}

Now we aim to find hypersurface singularities such that their normalisations are the RTP-singularities. In this setting, normalisation maps will be projections. Recall that a  \textit{generic} projection of a surface $(S,0)\subset (\mathbb{C}^N,0)$ is the restriction of a finite map $p'\colon  \mathbb{C}^N \rightarrow \CC^{3}$ such that its kernel is transversal to the tangent cone of $S$ at $0$ and its degree equals the multiplicity of $S$ at $0$.  By Theorem 4.2.1 of \cite{Le-Teissier}, there exists a Zariski dense open subset $U$ of the space of generic linear projections $(\mathbb{C}^N,0)\rightarrow (\mathbb{C}^3,0)$ such that for every $p'\in U$,
the image $(X,0)$ of $(S,0)$ is a reduced hypersurface and the induced map $p\colon (S,0)\rightarrow (X,0)$ is finite and bimeromorphic.

Now assume that $(S,0)\subset (\mathbb{C}^4,0)$ is a surface defined by the ideal $I=(F,G,H)$ where $F$, $G$ and $H$ are as in (\ref{defY}) with $a,b,c,d\in \mathcal{O}_{\mathbb{C}^2,0}$. Then a generic projection $p$ of $S$ into $\mathbb{C}^3$ can be chosen to be the restriction of the cartesian projection $(x,y,z,w)\mapsto (x,y,z)$ to $S$. Obviously,  the tangent cone of $S$ at $0$ is given by $(z^2+\cdots,zw+\cdots,w^2+\cdots)$ and the kernel is $\{x=y=z=0\}$.

By eliminating the variable $w$, we find that the image of $(S,0)$ in $\mathbb{C}^3$ is the hypersurface
$$(X,0):=\{z^3+3(bd-a^2)z+(3abd-2a^3-b^2c)=0\}.$$
Moreover, the degree of $p$ is $2$ since $\textnormal{dim}_\mathbb{C}\mathcal{O}_{(S,0)}/ p^*\mathfrak{m}_{\mathbb{C}^3,0}=2$. 

On the other hand, the image of a linear projection of $S$ with an RTP-singularity  is a surface $(X',0)$ in $\CC^3$ defined by an equation of the form
\begin{equation}\label{image-surf} z^\nu+f_1(x,y)z^{\nu-1}+\cdots +f_\nu(x,y)=0 \end{equation}
for some positive integer $\nu<\infty$. The following proposition shows that we actually have $\nu=3$. 
Here we refer to a generic projection giving the nonisolated form of an RTP-singularity as \textit{suitable} if the equation gives the expected minimal resolution graph by Oka's process. 

\begin{prop}\label{prop-hyper} A (suitable) projection of each of the Tjurina's equations into $\mathbb{C}^3$ is one of the nonisolated hypersurface given by the equations $(i)-(ix)$ below. 

\begin{enumerate}
\item $A_{k-1,\ell-1,m-1}$, $k,\ell,m\geq 2$.
    \begin{itemize}
    \item $k\geq \ell \geq m$,
    $$z^3+xz^2-(x+y^k+y^\ell+y^m)y^kz+y^{2k+\ell}=0,$$
    \item $k=\ell < m$,
    $$z^3+(x-y^k)z^2-(x+y^{k}+y^m)y^kz+y^{2k+m}=0.$$
        \end{itemize}
\item $B_{k-1,m}$, $k,m\geq 2$.
	\begin{itemize}
	\item $m=2\ell$, 
	$$z^3+xz^2-(y^{k+1}+y^{\ell})y^kz-xy^{2k+1}=0,$$
	\item $m=2\ell-1$, 
	$$z^3+(x-y^{\ell-1})z^2-y^{2k+1}z-xy^{2k+1}=0.$$
	\end{itemize}
\item $C_{k-1,\ell +1}$, $k,\ell\geq 2$,
$$z^3+xz^2-\ell x^{\ell-1}y^{2k}z-(x^\ell+y^2)y^{2k}=0.$$
\item $D_{k-1}$, $k\geq 2$,
$$z^3+(x+y^{2k})z^2+(2xy^{k}-y^2)y^kz+x^2y^{2k}=0.$$
\item $E_{6,0}$, $$z^3+y^3z+x^2y^2=0,$$
\item $E_{0,7}$, $$z^3+y^5+x^2y^2=0,$$
\item $E_{7,0}$, $$z^3+x^2yz+y^4=0.$$
\item $F_{k-1}$, $k\geq 2$,
$$z^3+(x+y^{2k})z^2+2xy^{2k}z+(x^2+y^{3})y^{2k}=0.$$
\item $H_n$, $n\geq 1$,
	\begin{itemize}
	\item $n=3k-1$; $$z^3+x^2y(x+y^{k-1})=0,$$
	\item $n=3k$; $$z^3+xy^kz+x^3y=0,$$
	\item $n=3k+1$; $$z^3+xy^{k+1}z+x^3y^2=0.$$
	\end{itemize}
\end{enumerate}
\end{prop}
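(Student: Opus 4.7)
The plan is to verify the statement by performing, row by row, the projection procedure outlined in the paragraphs preceding the proposition. For each of the nine RTP-types in Table~\ref{Tlist} I would bring Tjurina's matrix into the Miranda form of its third column, compute the image of the resulting determinantal surface under the cartesian projection $(x,y,z,w)\mapsto(x,y,z)$, and then compare the resulting cubic with the equation listed in~(i)--(ix) up to a harmless change of coordinates in $\CC^3$.

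First, I would record the projection formula once. Given a Miranda matrix
\begin{equation*}
\begin{bmatrix} z & w-3d & c \\ b & z-3a & w \end{bmatrix},
\end{equation*}
the three $2\times 2$ minors $M_{12}=z^2-3az-bw+3bd$, $M_{13}=zw-bc$ and $M_{23}=w^2-3dw-cz+3ac$ generate the ideal of $(S,0)$. Multiplying $M_{12}$ by $z$ and using $zw=bc$ from $M_{13}$ eliminates $w$ and yields
\begin{equation*}
z^3-3az^2+3bdz-b^2c=0,
\end{equation*}
which coincides with the cubic $z^3+3(bd-a^2)z+(3abd-2a^3-b^2c)=0$ displayed in the paragraph preceding the proposition after the shift $z\mapsto z-a$ relating mirmat1 and mirmat2. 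The two $B_{k-1,m}$ rows of the table carry a small modification (their $(2,3)$-entry is $w-y^{k+1}$ instead of $w$), which is absorbed by first translating $w$ by $y^{k+1}$ to restore the standard Miranda shape.

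Second, I would substitute $(a,b,c,d)$ from each row and normalize by a monomial change of coordinates (most often $z\mapsto -z$ followed by multiplication by $-1$, occasionally also $x\mapsto -x$) to bring the cubic into the exact form listed in~(i)--(ix). For instance, $(a,b,c,3d)=(0,y,y^2,x^2)$ for $E_{7,0}$ produces $z^3+x^2yz-y^4=0$, which becomes~(vii) after $z\mapsto -z$ and $\times(-1)$; for $D_{k-1}$ and $F_{k-1}$ the same $z$-flip followed by $x\mapsto -x$ yields~(iv) and~(viii) respectively; the row $H_{3k+1}$ requires only $x\mapsto -x$. The rows $E_{6,0}$, $E_{0,7}$, both $B_{k-1,m}$ rows, and $H_{3k-1}$, $H_{3k}$ are handled by the same style of substitution and sign flip. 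In the $C_{k-1,\ell+1}$ row, the correction factor $(1-\binom{\ell}{3}x^{\ell-3}y^{2k})$ built into the entry $b$ is designed so that, after expanding $b^2c$ and $3bd$, the cubic agrees with~(iii) up to strictly higher order terms that lie above the Newton polygon governing the Oka process of Section~\ref{sec-rtp} and are therefore irrelevant.

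Finally, for the sub-cases of $A_{k-1,\ell-1,m-1}$ I would exploit the permutation symmetry of the underlying singularity type and use two distinct Miranda reductions of Tjurina's matrix. The reduction recorded in Table~\ref{Tlist} reproduces the equation in~(i) for the sub-case $k=\ell<m$ directly after the standard $z$-flip. For the sub-case $k\geq\ell\geq m$ I would use the variant Miranda form with data $(3a,b,c,3d)=(x,y^k,y^\ell,-(x+y^k+y^\ell+y^m))$, obtained by the row and column operations of Appendix~B adapted to this ordering; its cubic becomes $z^3+xz^2-(x+y^k+y^\ell+y^m)y^kz+y^{2k+\ell}=0$ after $z\mapsto -z$, $\times(-1)$, matching the first part of~(i). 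The main obstacle is the arithmetic bookkeeping across the full list, in particular in the $C_{k-1,\ell+1}$ row (where the $\binom{\ell}{3}$ correction has to absorb several higher-order monomials) and in the $A_{k-1,\ell-1,m-1}$ case (where one must justify that the variant Miranda form used for $k\geq\ell\geq m$ is a bona fide reduction of Tjurina's matrix for that ordering and produces a suitable Newton polygon). Each individual verification is routine, but collectively they make up the bulk of the proof.
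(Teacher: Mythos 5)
Your proposal follows essentially the same route as the paper: its proof consists precisely of taking the maximal minors of the Miranda-form matrices in Table \ref{Tlist}, applying the natural projection $(x,y,z,w)\mapsto(x,y,z)$ (i.e.\ eliminating $w$ exactly as in your cubic $z^3-3az^2+3bd\,z-b^2c$), and matching the resulting equations case by case up to harmless normalisations. The only divergence is the case $A_{k-1,\ell-1,m-1}$ with $k\geq\ell\geq m$, where the paper performs the extra substitution $x\mapsto x-y^k$ before projecting instead of your variant Miranda reduction; both are minor normalisations of the same computation.
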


\begin{proof} Consider the equations obtained from the matrices in Miranda's form in Table \ref{Tlist}
and the natural projection $(x,y,z,w)\mapsto (x,y,z)$.  Only the hypersurface equation for $A_{k-1,\ell-1,m-1}$, in the case $k\geq \ell \geq m$, requires an extra transformation of the form $x\mapsto x-y^k$ before the projection.
\end{proof}

\begin{rem} There are many projections one can apply to Tjurina's equations. However, not all of them have cubic surfaces as images. For example, the image of $F_5$ under the projection $(x,y,z,w)\mapsto (x,y,w)$, which is not finite, is an isolated singularity given by $\{-w^2+xy^3+x^3=0\}$. 
\end{rem}

\begin{rem} There also exist cubic hypersurfaces in $\mathbb{C}^3$ which are not rational. For instance, the image of the series $H_k\colon (\mathbb{C}^2,0)\rightarrow (\mathbb{C}^3,0)$, $(x,y)\mapsto (x,y^3,xy+y^{3k-1})$, which is from Mond's classification in \cite{mond87}, is the variety  $\{Z^3-3XY^kZ-X^3Y-Y^{3k-1}=0\}$. Its singular locus is also 1-dimensional but the surface is not rational. Its normalisation is, in fact, $(\mathbb{C}^2,0)$.
\end{rem}

The projection in $\mathbb{C}^3$ of the normal surface singularities is very useful to understand the deformations of normal surface singularities (see, for example, \cite{jongvan}).
In a forthcoming paper, we will show that a rational singularity of multiplicity $m\geq 4$ can be  written as (\ref{image-surf}) with $\nu=m$ by some projection.


\subsection{Resolution of nonisolated forms by Newton polygons}

In order to construct the minimal resolutions of RTP-singularities using Oka's theory (\cite{oka}), we start by recalling some notions needed. For details see \cite{ewald} or \cite{fulton}. 

Let $M$ be an integral lattice of rank $n$ with the standard basis $e_1,\ldots,e_n$ and $N$
be its dual integral lattice. Let $M_{\R}:= M \otimes_\mathbb{Z} \R$ and $N_{\R}:= N \otimes_\mathbb{Z} \R$ be the corresponding real vector spaces.
 We will refer to the points of $N$ as \textit{integral} vectors and a vector $\textbf{u}=(u_1,\ldots,u_n)\in N_\mathbb{R}$ as \textit{primitive} if all of its coordinates $u_i$ are coprime.

A nonempty subset $\sigma$ of $N_\mathbb{R}$ is called a \textit{cone} if $\alpha\cdot \textbf{u}\in \sigma$ for all $\textbf{u}\in \sigma$ and $\alpha\in \mathbb{R}$. A \textit{convex polyhedral
cone} is the positive span of a finite set of vectors $\textbf{u}_1,\ldots, \textbf{u}_k\in N_\mathbb{R}$; that is,
 $$\sigma  := \lbrace \sum_{i=1}^k{\lambda_i\textbf{u}_i} \mid  \textbf{u}_i \in  N_{\R}, \lambda_i\in \R_{\geq 0} \rbrace .$$
In this case, we say that $\sigma$ is generated by $\textbf{u}_1,\ldots, \textbf{u}_k$. A convex polyhedral cone generated only by integral vectors is called \textit{rational}
and \textit{strongly convex} if $\sigma \cap (-\sigma)= \{0\}$.
The \textit{dimension} of a cone $\sigma$ is the dimension of the linear space $\R \cdot \sigma$. 

The \textit{dual} of a convex polyhedral cone $\sigma$ is
$$\check{\sigma}=\{\textbf{v} \in M_{\R} \mid \langle \textbf{u},\textbf{v} \rangle \geq 0, \ \forall \textbf{u}\in \sigma \}.$$ A \textit{face} $\tau $ of $\sigma$ is defined by $$\tau := 
\{\textbf{u} \in \sigma \mid  \langle \textbf{u},\textbf{v} \rangle = 0, \textbf{v} \in \check{\sigma}\cap M \} $$

In the rest of this section, $\sigma$ and the word ``cone" will refer to a strongly convex rational polyhedral cone. We will also denote a cone with generators $\textbf{u}_1,\ldots,\textbf{u}_k$ by
$\sigma_{\textbf{u}_1\ldots \textbf{u}_k}$ when we want to emphasise on the generators.

\begin{defn}	
	The \textit{determinant} of a cone $\sigma=\sigma_{\textbf{u}_1\ldots\textbf{u}_k}$ with $\textbf{u}_i=(u_{1i},\ldots,u_{ni})$, $i=1,\ldots ,k$, is the greatest common divisor of $(k\times
k)$-minors of the $(n\times k)$-matrix $U=(u_{ij})$ and denoted by $\textnormal{det}(\sigma)$.
\end{defn}

\begin{defn}  A cone $\sigma$ is \textit{regular}  if its determinant  is equal to $\textnormal{det}(\sigma)=\pm1$.
\end{defn}

\begin{defn}
		A \textit{ fan} $\mathcal P$ of dimension $n$ in $N_{\R}$ is a finite family of $n$-dimensional cones such that
    \begin{enumerate}
		\item Each face of a cone is also a cone in $\mathcal P$,
		\item Any intersection of two cones in $\mathcal P$ is a face of the two cones.
	\end{enumerate}
\end{defn}

\begin{ex}\label{ex-fan} Consider the three vectors $\textbf{u}=(5,4,6)$, $\textbf{v}=(1,0,2)$ and $\textbf{w}=(0,3,2)$ in $\R^3$. The  fan $\mathcal P$ consisting of the 
cones $\sigma_{\textbf{u}\textbf{e}_1\textbf{v}}$, $\sigma_{\textbf{u}\textbf{v}\textbf{e}_3\textbf{w}}$ and $\sigma_{\textbf{u}\textbf{w}\textbf{e}_2\textbf{e}_1}$ 
is pictured in Figure
\ref{fig:N/P/DNP_E_6}a. Its section by the hyperplane $\{x+y+z=1\}$ is also drawn in Figure \ref{fig:N/P/DNP_E_6}b.
\end{ex}

\begin{figure}[h]
\begin{center}$
\begin{array}{cccc}
\resizebox{104pt}{144pt}{\input{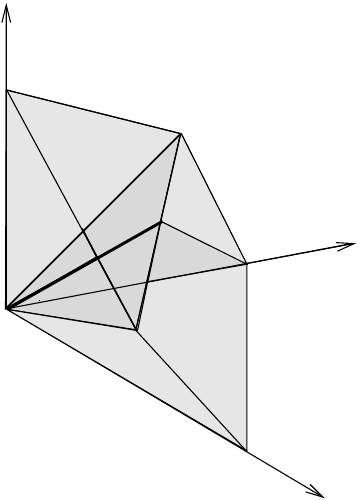_t}}& & &
\resizebox{165pt}{143pt}{\input{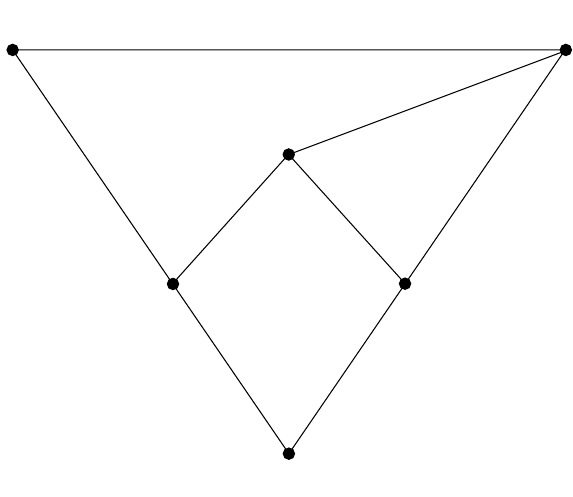_t}}\\ (a) & & & (b)
\end{array}$
\end{center}
\caption{Fan formed by three cones of dimension $3$ in $\R^3$}
\label{fig:N/P/DNP_E_6}
\end{figure}

\begin{defn} A fan $\mathcal P$ is called \textit{regular} if $\textnormal{det}(\sigma)=\pm 1$ for all $\sigma\in \mathcal P$. \end{defn}

\begin{ex}
The fan $\mathcal P$ in Figure \ref{fig:N/P/DNP_E_6}a is not  regular because $\textnormal{det}(\sigma_{\textbf{u}\textbf{e}_1\textbf{v}})=8$.
\end{ex}

\begin{rem}
 Suppose that $\textbf{u}=(1,0,\cdots,0)$ and $\textbf{v}=(v_1,\ldots,v_n)$ two primitive integral vectors generating the cone $\sigma_{\textbf{u}\textbf{v}}$. 
 Then $s:=\textnormal{det}(\sigma_{\textbf{u}\textbf{v}})=\textnormal{gcd}(v_2,\ldots,v_n)$ and $\textnormal{gcd}(s,v_1)=1$ as $\textbf{v}$ is primitive. Let $\textbf{t}\in \sigma_{\textbf{u}\textbf{v}}$ with 
 $\textnormal{det}(\sigma_{\textbf{u}\textbf{t}})=1$. Then there exist positive rational numbers $\alpha,\beta$ such that $\textbf{t}=\alpha\textbf{u}+\beta\textbf{v}$.
 Observe that $\textnormal{det}(\sigma_{\textbf{u}\textbf{t}})=\textnormal{gcd}(\beta v_2,\ldots,\beta v_n)=\beta s$ and $\textnormal{det}(\sigma_{\textbf{t}\textbf{v}})=\alpha s$ are integers.
 Thus the assumption $\textnormal{det}(\sigma_{\textbf{u}\textbf{t}})=1$ implies that $\textbf{t}=\frac{\textbf{v}+\alpha s\cdot\textbf{u}}{s}$.
\end{rem}

More generally, if a given fan is not regular it can be made regular by a process called \textit{regular subdivision}.

\subsubsection{Regular subdivision of a 2-dimensional cone}

Let $\sigma:=\sigma_{\textbf{u}\textbf{v}}$ be a $2$-dimensional cone generated by two primitive vectors
$\textbf{u}, \textbf{v}$ in $\N_{\R}$.  Assume that $s_0:=\textnormal{det}(\sigma) >1$. Then there exists a unique integral vector $(\textbf{uv})_1\in \sigma$ \textit{between} $\textbf{u}$ and
$\textbf{v}$ defined by
$(\textbf{uv})_1=\frac{\textbf{v}+s_1\cdot \textbf{u}}{s_0}$  with $s_1:= \textnormal{det}((\textbf{uv})_1,\textbf{v}) \in \ZZ_{\geq 0}$, $1\leq s_1 < s_0$ and satisfying
$\textnormal{det}(\textbf{u},(\textbf{uv})_1) =1$.

\begin{propdef}[\cite{oka}]\label{def-reg} A \textit{regular subdivision} of $\sigma$ is the finite decomposition
$\{(\mathbf{uv})_0=\mathbf{u},(\mathbf{uv})_1,\ldots,(\mathbf{uv})_\alpha,(\mathbf{uv})_{\alpha+1}=\mathbf{v}\}$ where $(\mathbf{uv})_{i+1}\in \sigma_{(\mathbf{uv})_i\mathbf{v}}$
and is determined by the formula
$$(\mathbf{uv})_{i+1}=\frac{\mathbf{v}+s_{i+1}\cdot (\mathbf{uv})_i}{s_i}$$
and  the conditions $s_{i+1} := \textnormal{det}((\mathbf{uv})_i,\mathbf{v})\in \ZZ_{\geq 0}$,  $1<s_{i+1}< s_{i}$ for all $i=0,\ldots, \alpha-2$, and $s_\alpha=1$. Then we have
$\textnormal{det}((\mathbf{uv})_i,(\mathbf{uv})_{i+1})=1$ for all $i=0,\ldots, \alpha$.
\end{propdef}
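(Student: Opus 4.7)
The plan is to proceed by induction on $i$, verifying at each stage (as long as $s_i > 1$) that $(\textbf{uv})_{i+1}$ as defined is a primitive lattice vector in the cone $\sigma_{(\textbf{uv})_i \textbf{v}}$, with $\det((\textbf{uv})_i, (\textbf{uv})_{i+1}) = 1$ and $1 \leq s_{i+1} < s_i$. The determinant identity follows at once from bilinearity:
\[
\det\bigl((\textbf{uv})_i, (\textbf{uv})_{i+1}\bigr) = \det\!\Bigl((\textbf{uv})_i,\ \tfrac{\textbf{v}+s_{i+1}(\textbf{uv})_i}{s_i}\Bigr) = \tfrac{1}{s_i}\det\bigl((\textbf{uv})_i, \textbf{v}\bigr) = 1,
\]
and an analogous manipulation yields $\det((\textbf{uv})_{i+1}, \textbf{v}) = s_{i+1}$, so the two possible descriptions of $s_{i+1}$ agree. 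Membership of $(\textbf{uv})_{i+1}$ in $\sigma_{(\textbf{uv})_i \textbf{v}}$ is automatic once $s_{i+1} \geq 0$, since $(\textbf{uv})_{i+1}$ is then a nonnegative combination of the two generators.

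The substance of the argument is verifying integrality. By the inductive hypothesis (or, at the base step $i=0$, by completing the primitive vector $\textbf{u}$ to a $\mathbb{Z}$-basis of $N$), the pair $\{(\textbf{uv})_{i-1}, (\textbf{uv})_i\}$ is a $\mathbb{Z}$-basis of the lattice. Expanding $\textbf{v}$ in this basis and using the two determinants $\det((\textbf{uv})_{i-1}, \textbf{v}) = s_{i-1}$ and $\det((\textbf{uv})_i, \textbf{v}) = s_i$, one obtains $\textbf{v} = -s_i (\textbf{uv})_{i-1} + s_{i-1}(\textbf{uv})_i$. Substituting this into the defining formula gives the basis-coordinates $\bigl(-1,\ (s_{i-1}+s_{i+1})/s_i\bigr)$ for $(\textbf{uv})_{i+1}$, so integrality amounts to the congruence $s_{i+1} \equiv -s_{i-1} \pmod{s_i}$, which admits a unique solution $s_{i+1} \in \{0, 1, \ldots, s_i - 1\}$. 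Primitivity of $(\textbf{uv})_{i+1}$ is then automatic because its first basis-coordinate is $-1$.

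The main (though small) obstacle is ruling out $s_{i+1} = 0$. If that occurred, the formula would force $(\textbf{uv})_{i+1} = \textbf{v}/s_i$, exhibiting $\textbf{v}$ as a nontrivial integer multiple of a lattice vector and contradicting primitivity of $\textbf{v}$ as soon as $s_i > 1$. Hence $1 \leq s_{i+1} < s_i$, and the sequence $s_0 > s_1 > s_2 > \cdots$ is a strictly decreasing chain of positive integers; it must reach $1$ after finitely many steps, and that index is $\alpha$. The subdivision terminates there because the final cone $\sigma_{(\textbf{uv})_\alpha \textbf{v}}$ satisfies $\det((\textbf{uv})_\alpha,\textbf{v}) = s_\alpha = 1$ and is already regular, completing all the claims in the statement.
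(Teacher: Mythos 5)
Your proposal is correct, but note that the paper itself gives no proof of this Proposition--Definition: it is quoted from Oka's book, and the only in-text justification is the preceding Remark, which treats the special case $\mathbf{u}=(1,0,\ldots,0)$ by reading off the gcd condition in coordinates. Your induction is essentially a completed, general version of that remark: the base step is exactly the paper's coordinate computation (complete the primitive $\mathbf{u}$ to a basis, reduce integrality to a congruence mod $s_0$), and the inductive step replaces ad hoc coordinates by the basis $\{(\mathbf{uv})_{i-1},(\mathbf{uv})_i\}$, which is what makes the argument self-contained; you also correctly identify the one nontrivial point (ruling out $s_{i+1}=0$ via primitivity of $\mathbf{v}$) and get termination from strict decrease of the $s_i$. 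Two small points of polish. First, since the paper's ``determinant'' is the gcd of the $2\times 2$ minors of an $n\times 2$ matrix, it is unsigned, so the identity $\mathbf{v}=-s_i(\mathbf{uv})_{i-1}+s_{i-1}(\mathbf{uv})_i$ is not quite forced by the two determinant values alone (they determine the coefficients only up to sign); the cleanest route is to observe that this relation is just the rearrangement of the previous recursion step $s_{i-1}(\mathbf{uv})_i=\mathbf{v}+s_i(\mathbf{uv})_{i-1}$, after which your congruence argument goes through verbatim. Second, ``a $\mathbb{Z}$-basis of the lattice'' should be stated as a basis of the saturated rank-two sublattice $N\cap(\mathbb{R}\mathbf{u}+\mathbb{R}\mathbf{v})$: the condition $\det((\mathbf{uv})_{i-1},(\mathbf{uv})_i)=1$ means precisely that the pair generates this saturated sublattice (Smith normal form), and that is what licenses the integral expansion of $\mathbf{v}$ and the primitivity conclusion from the coefficient $-1$. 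With these clarifications your argument is a complete and correct proof of all the claims in the statement.
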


\begin{ex}
\label{ex:cone1}
 Let $\sigma$ be a $2$-dimensional cone generated by $\textbf{u}=(5,4,6)$ and $\textbf{v}=(1,0,2)$ in $\R^3$ (see Example \ref{ex-fan}). We have $s_0=\textnormal{det}(\sigma)=\textnormal{gcd}(8,4,4)=4 > 1$. We
find
 \begin{alignat*}{1}
 (\textbf{uv})_1 & =  \frac{\textbf{v}+s_1\cdot \textbf{u}}{s_0}=(4,3,5), \hskip26pt s_1=3, \\
 (\textbf{uv})_2 & = \frac{\textbf{v}+s_2\cdot (\textbf{uv})_1}{s_1}=(3,2,4), \ s_2=2, \\
 (\textbf{uv})_3 & =  \frac{\textbf{v}+s_3\cdot (\textbf{uv})_2}{s_2}=(2,1,3), \ s_3=1.
 \end{alignat*}
 Hence the decomposition $\{\textbf{u},(\textbf{uv})_1,(\textbf{uv})_2,(\textbf{uv})_3,\textbf{v}\}$ is the regular subdivision of $\sigma$.
\end{ex}

\subsubsection{Algorithm for constructing a graph from a 3-dimensional fan}\label{sect-algo} Following Oka's theory in \cite{oka} (see also Section \ref{sect-np}), one can associate a graph to any regular subdivision of a fan as follows.

Let $\mathcal P$ be a  fan in $N_{\R}\cap \R_{\geq 0}^3$ with generators $\textbf{u}_1,\ldots, \textbf{u}_k\in \mathbb{N}^{\oplus 3}$ such that each $\textbf{u}_i$ is a face of a 2-dimensional cone in $\mathcal P$.  Then
\vskip.2cm

\noindent \textbf{Step 1a.} Pick a generator $\textbf{u}:=\textbf{u}_i \in \mathcal P\cap (\mathbb{N}-\{0\})^{\oplus 3}$. 
Consider all 2-dimensional cones $\sigma_{\textbf{u}\textbf{v}_1},\ldots,\sigma_{\textbf{u}\textbf{v}_l}\in \mathcal P$ which are adjacent to $\sigma_{\textbf{u}}$ in $\mathcal P$, i.e. $\sigma_{\textbf{u}\textbf{v}_i}\cap \sigma_{\textbf{u}\textbf{v}_j}=\sigma_\textbf{u}$, $i\neq j$, for $\sigma_{\textbf{v}_j}\in \mathcal P$,
$j=1,\ldots l$.

\noindent \textbf{Step 1b}. For each 2-dimensional cone $\sigma_{\textbf{u}\textbf{v}_j}\in \mathcal P$, $(j=1,\ldots ,l)$, find the regular subdivision 
$\{(\textbf{u}\textbf{v}_j)_0=\textbf{u},\ldots,(\textbf{u}\textbf{v}_j)_{\alpha_i},(\textbf{u}\textbf{v}_j)_{\alpha_i+1}=\textbf{v}_j\}$. 
			
\noindent \textbf{Step 1c}. Construct a tree $\Gamma^j_{\textbf{u}}$ as follows. Assign the vertices  $V_0^{(j)},V_1^{(j)},\ldots,V_{\alpha_i+1}^{(j)}$ to the vectors
$(\textbf{u}\textbf{v}_j)_0,\ldots,$ $(\textbf{u}\textbf{v}_j)_{\alpha_i+1}$ respectively and draw an edge between $V_i^{(j)}$ and $V_{i+1}^{(j)}$ for all 
$i=0,\ldots, \alpha_i$. Note that $V_0^{(j)}=V_0^{(1)}$ since $(\textbf{u}\textbf{v}_j)_0=\textbf{u}$ for all $j$. Let $V_0:=V_0^{(1)}$.
	
\noindent \textbf{Step 1d}.  Erase the vertex $V_{\alpha_j+1}^{(j)}$ and its adjacent edges such that
the graph $\Gamma^j_{\textbf{u}}$ remains connected if its associated vector $\textbf{v}_j$ is not strictly positive. 
 
	
\noindent \textbf{Step 2}. Glue all $\Gamma^j_{\textbf{u}}$ along the common vertex $V_0$ to obtain an abstract graph $\Gamma_{\textbf{u}}$ corresponding to $\textbf{u}$.

\noindent \textbf{Step 3}. Find the graph $\Gamma_{\textbf{u}_i}$ for each strictly positive generator $\textbf{u}_i$ of $\mathcal{P}$ and glue $\Gamma_{\textbf{u}_i}$ to $\Gamma_{\textbf{u}_j}$ for each $i\neq j $ along their common vertices,
if exists. The resulted connected graph is the graph of the fan $\mathcal P$, denoted by $\Gamma_\mathcal P$ (cf. \cite{oka}).

\begin{ex}
 \label{ex:cone2} Let $\mathcal{P}$ be the fan studied in Example \ref{ex-fan} and take $\textbf{v}_1:=\textbf{v}$, $\textbf{v}_2:=\textbf{w}$, $\textbf{v}_3:=\textbf{e}_1$. The regular subdivision of
$\sigma_{\textbf{u}\textbf{v}_1}$ given by
$$\{\textbf{u},(\textbf{u}\textbf{v}_1)_1,(\textbf{u}\textbf{v}_1)_2,(\textbf{u}\textbf{v}_1)_3,\textbf{v}_1\}=\{\textbf{u}, (4,3,5), (3,2,4), (2,1,3), \textbf{v}_1\}$$
(see Example~\ref{ex:cone1}). The corresponding graph $\Gamma^1_\textbf{u}$ is then a tree consisting of $5$ vertices as shown in 
Figure \ref{fig:tree1}.

 		The regular subdivision of $\sigma_{\textbf{u}\textbf{v}_2}$ is $\{\textbf{u},(\textbf{u}\textbf{v}_2)_1,(\textbf{u}\textbf{v}_2)_2,\textbf{v}_2\}=\{\textbf{u}, (3,3,4),  (1,2,2),
\textbf{v}_2\}$.
Then $\Gamma^2_\textbf{u}$  is a tree with $4$ vertices (see Figure \ref{fig:tree2}). Finally, the regular subdivision of 
$\sigma_{\textbf{u}\textbf{v}_3}$ is 
$\{\textbf{u},(\textbf{u}\textbf{v}_3)_1,\textbf{v}_3\}=\{ \textbf{u}, (3,2,3), \textbf{v}_3 \} $. So, we get the graph $\Gamma^3_\textbf{u}$ with 3 vertices as
shown in Figure \ref{fig:tree3}.

	 \begin{figure}[h!]
		\centering
		\subfloat[$\Gamma^1_\textbf{u}$]
		{\resizebox{153pt}{25pt}{\input{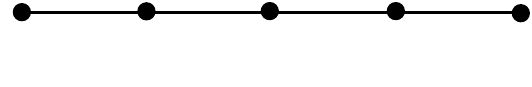_t}}
		\label{fig:tree1}} \quad
		\subfloat[$\Gamma^2_\textbf{u}$]
		{	\resizebox{117pt}{25pt}{\input{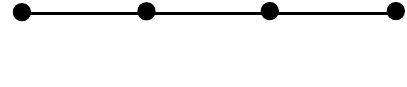_t}}
		\label{fig:tree2}} \quad
		\subfloat[$\Gamma^3_\textbf{u}$]
		{	\resizebox{81pt}{25pt}{\input{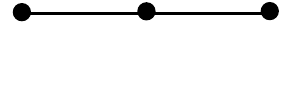_t}}
		\label{fig:tree3}}
		\caption{Examples of trees}
		\label{fig:trees}
 	\end{figure}
Therefore, to construct the graph for $\mathcal{P}$, we delete the vertices $V_4^{(1)}$, $V_3^{(2)}$, $V_2^{(3)}$ which correspond to non-strictly positive vectors $\textbf{v}_1$, $\textbf{v}_2$, $\textbf{v}_3$ respectively. Then we glue  $\Gamma^1_\textbf{u}$, $\Gamma^2_\textbf{u}$ and $\Gamma^3_\textbf{u}$ by overlapping the vertices $V_0^{(1)}=V_0^{(2)}=V_0^{(3)}$. This gives the graph $\Gamma_{\mathcal P}$ shown in Figure \ref{fig:treegraph}.
	 \begin{figure}[h!]
		\centering
		\resizebox{194pt}{65pt}{\input{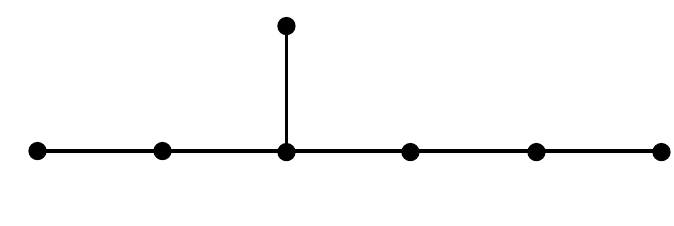_t}}
 		\caption{Graph corresponding to the fan in Example \ref{ex-fan}}
	\label{fig:treegraph}
	\end{figure}

\end{ex}

\begin{rem} One can assign a weight and a genus  to each vertex of any abstract connected graph in a way that the intersection matrix associated with the graph is negative definite. We can obtain a configuration of curves by associating a curve to each vertex in the graph and intersecting any two corresponding curves if there exists an edge between them.  By plumbing construction around such configuration, we can embed the configuration into an analytic surface $\tilde{X}$. As it has the negative definite intersection matrix,  $\tilde{X}$ becomes a resolution of an analytic surface singularity (\cite{grauert}). The graph in Figure \ref{fig:treegraph} with the weight $2$ and genus $0$ assigned to each vertex is the minimal resolution graph of the RDP-singularity of type $E_7$. Note that 
the graph in Figure \ref{fig:treegraph} will also represent the minimal resolution graph of the RTP-singularity of type $E_{6,0}$ (see Table \ref{tablo-res}).  
\end{rem}

We can relate a graph obtained by this process to a hypersurface singularity if we choose the fan $\mathcal{P}$ to be the \textit{dual Newton polygon} of the equation defining the singularity.

\subsubsection{Newton Polygon of a singularity}\label{sect-np} 
Let $f(\textbf{z}) := \sum_\textbf{v} a_\textbf{v} \textbf{z}^\textbf{v}\in \mathcal{O}_{\mathbb{C}^n,0}$  be a germ of an analytic function 
where $\textbf{z}^\textbf{v} = z_1^{v_1} \cdots  z_n^{v_n}$ with $\textbf{v}=(v_1,\ldots,v_n)\in \mathbb{N}^n$.  The \textit{support} of $f$ is the set
\begin{align*}
	 \supp{f}:=\lbrace \textbf{v}\in\mathbb{N}^n \mid a_\textbf{v} \not= 0  \rbrace.
\end{align*}

\begin{defn}
The Newton polygon $NP(f)$ of $f$ is the boundary of the convex closure in $M_{\R}$ of
	\begin{align*}
		\bigcup_{\textbf{v}\in\supp{f}} \lbrace{\textbf{v}+\mathbb{R}_{\geq 0}^n}\rbrace \subseteq M_{\R}
	\end{align*}
\end{defn}

	\begin{defn}
	  Given a vector $\textbf{u} \in N_\mathbb{R}\cap \R_{\geq0}^n$,  the \textit{face} of $NP(f)$ with respect to $\textbf{u}$ is defined as
		\begin{center}
			$F_{\textbf{u}}:=\lbrace \textbf{v}\in NP(f) \mid \langle \textbf{u},\textbf{v} \rangle=\min_{\textbf{w}\in NP(f)} \langle \textbf{u},\textbf{w} \rangle \rbrace
			\ \subseteq M_{\R}$
		\end{center}
		\label{defn:face}
	\end{defn}
	
Note that $\textbf{u}$ is normal to the face $F_\textbf{u}$. Let us refer to the $(n-1)$-dimensional faces of a polygon as \textit{facets}. Let us define an equivalance relation on $N_\mathbb{R}\cap \mathbb{R}_{\geq0}^n$ by
	$$\textbf{u} \sim \textbf{u}' \textnormal{ if and only if } F_\textbf{u} = F_{\textbf{u}'}.$$
Then each equivalence class forms a cone structure in $N_\mathbb{R}\cap \mathbb{R}_{\geq0}^n$. In fact,
these cones form a fan; it is called \textit{the dual fan} of $NP(f)$ and denoted by $DNP(f)$.  Hence there is a one to one correspondence between the cones of $DNP(f)$ and the faces of
$NP(f)$.


If $f$ is \textit{non-degenerate} (see Definition \ref{defn:nondeg}) then a toric modification associated to a regular subdivision of $NP(f)$ resolves the singularity defined by $f$. In the case of surface singularities, Oka's results provide a canonical way to obtain resolution graph. Before stating his construction we need the following definition.

\begin{defn} Define an integer $g(\textbf{u})$ to be the number of integer points in the interior of the face $F_{\textbf{u}}$ and $r(\sigma_{\textbf{u}\textbf{v}})$ to be the number of integer points in the interior of the face $F_\textbf{u}\cap F_\textbf{v}$ in
$NP(f)$. \end{defn} 

\begin{defn}[Oka's resolution process, \cite{oka}]\label{defn-oka} Let $X$ be a $2$-dimensional hypersurface defined by $f\in \mathcal{O}_{\mathbb{C}^3,0}$ with an isolated singularity at the origin. \textit{Oka's resolution process} for constructing the resolution graph $\Gamma_f$ of a resolution of $X$ consists of two steps. First,  construct $\Gamma_f$ by applying the algorithm in Section \ref{sect-algo} to $DNP(f)$ with an additional operation: glue $r(\sigma_{\textbf{u}\textbf{v}_j})+1$ copies of the trees $\Gamma^j_{\textbf{u}}$ along their beginning vertices. 
Secondly, associate weights $w_1,\ldots, w_\alpha$ to each $\Gamma^j_{\textbf{u}}$  defined by the continuous fraction
	\begin{equation}
		[w_1: \ldots :w_\alpha]:=\frac{s}{s_1} = w_1 - \cfrac{1}{\ w_2
	     - \cfrac{1}{\ddots \
	      - \cfrac{1}{w_\alpha}}}
		\label{eqn:continuous/frac}
	\end{equation}
and a weight $w_\textbf{u}$ to $\textbf{u}$ defined by
	\begin{equation}\label{eq-weight}
		w_\textbf{u}:=  \frac{\displaystyle \sum_{j=1}^{l}r(\sigma_{\textbf{u}\textbf{v}_j})\cdot (\textbf{u}\textbf{v}_j)_1}{\textbf{u}}
	\end{equation}
where $l$ is the number of $2$-dimensional cones for which $\textbf{u}$ is one of the generators. 

Then, the vertices corresponding to the vectors $\textbf{u}$ represent the components of $E$ with genus $g(\textbf{u})$ and self intersection numbers $-w_{\mathbf{u}}$, and all the others, i.e. the vertices in $\Gamma_f$ coming from the vectors added in the process of regular subdivision, represent the components of $E$ with genus $0$ and self intersection numbers $-w_i$ calculated by the formula (\ref{eqn:continuous/frac}).
\end{defn}

\begin{rem}If $\mathcal{P}$ is an abstract graph, we can associate weights to all vertices which are introduced by a regular subdivision by the continuous fraction (\ref{eq-weight}). For the other vertices we can deduce the weight by the following fact  for graphs
\begin{eqnarray*}
-a_jw_j+\sum_{i}a_i &\geq& 0, \textnormal{ for all } j,\\
-a_jw_j+\sum_{i}a_i&>&0, \textnormal{ for some } j 
\end{eqnarray*}
where the sum is taken over all $i$ such that the $i$th vertex is connected to the $j$th vertex by an edge. 
The existence of such $a_i$'s are due to Laufer's algorithm (\cite{laufer2}) for the computation of Artin's divisior.
\end{rem}

\begin{ex}\label{ex-dnpf}
	 Let us consider the singularity $E_{6,0}$  given by $f(x,y,z) := z^3+ y^3z+x^2y^2 \in \mathcal{O}_{\mathbb{C}^3,0}$. The support of $f$ is $\supp{f} = \{(2,2,0),(0,3,1),(0,0,3)\}$ and its Newton polygon $NP(f)\subseteq \R^3$ is shown in Figure \ref{fig-NP}.
	\begin{figure}[h!]
		\centering
		\resizebox{212pt}{158pt}{\input{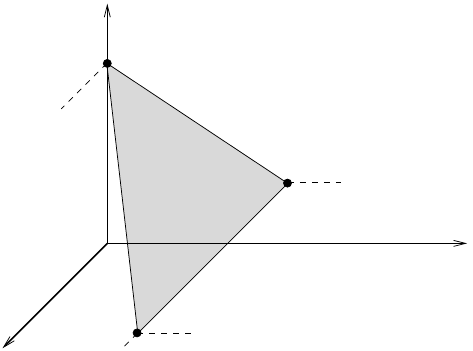_t}}
		\caption{Newton polygon of the nonisolated form of $E_{6,0}$.}
	\label{fig-NP}
	\end{figure}
	
\noindent We see that $NP(f)$ has one compact facet $F_\textbf{u}$ and five non-compact facets $F_{\textbf{e}_1}, F_{\textbf{e}_2}$, $F_{\textbf{e}_3}, F_{\textbf{v}_1}, F_{\textbf{v}_2} $ with the
normal vectors $\textbf{u}=(5,4,6)$, $\textbf{e}_1=(1,0,0)$, $\textbf{e}_2=(0,1,0)$, $\textbf{e}_3=(0,0,1)$ and $\textbf{v}_1=(1,0,2)$, $\textbf{v}_2=(0,3,2)$ respectively. Hence the dual space $DNP(f)$
is the fan given in Fig.~\ref{fig:N/P/DNP_E_6} which was studied in Examples \ref{ex:cone2}. 

The weights for the subdivision of  $\sigma_{\textbf{u}\textbf{v}_1}$ are $[2:2:2]$. For $\sigma_{\textbf{u}\textbf{v}_2}$ the weights are $[3:2]$; and
for $\sigma_{\textbf{u}\textbf{v}_3}$ the weight is $[2]$. The central vertex corresponding to $\textbf{u}$ has  weight $2$. Furthermore, $g(\textbf{u})=0$ and 
$r(\sigma_{\textbf{u}\textbf{v}_j})=0$ for all $j=1,2,3$. 
Therefore, Oka's resolution process applied to the nonisolated form of $E_{6,0}$ yields the graph in Figure \ref{fig:treegraph} which is the minimal resolution graph of $E_{6,0}$ by Artin's classification (see also Table \ref{tablo-res}).
\end{ex}

Note that the value of $r(\sigma_{\textbf{u}\textbf{v}_j})$ can be obtained in a different way as follows.

\begin{defn}\label{def-alt-r} With the notation in Section \ref{sect-algo},  let $\textbf{u}\in \mathcal{P}$ be a strictly positive vector. Let us take the projection of $\mathcal P$ onto the plane whose normal is $\textbf{u}$ to get the vectors $\tilde{\textbf{v}}_1,\ldots,\tilde{\textbf{v}}_l$ which are not necessarily strictly positive. We define the constants $c_1,\ldots,c_l \in \mathbb{N}-\{0\} $ to be the minimal solution of
$\sum c_j\tilde{\textbf{v}}_j=0$. 
\end{defn}

By direct calculation, we have

\begin{lem} For an RTP-singularity, we have $c_j=r(\sigma_{\mathbf{u}\mathbf{v}_j})+1$ for all $j$.
\end{lem}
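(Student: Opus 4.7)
The strategy is to identify both sides with the \emph{lattice length} $\ell_j$ of the edge $e_j:=F_{\textbf{u}}\cap F_{\textbf{v}_j}$ of the facet $F_{\textbf{u}}\subset NP(f)$. The right-hand side is immediate from the definitions: since $r(\sigma_{\textbf{u}\textbf{v}_j})$ counts the interior lattice points of $e_j$, the edge contains $r(\sigma_{\textbf{u}\textbf{v}_j})+2$ lattice points in total and has lattice length $\ell_j=r(\sigma_{\textbf{u}\textbf{v}_j})+1$. The real content of the lemma is therefore the equality $c_j=\ell_j$.

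I read the projection in Definition \ref{def-alt-r} as applied \emph{after} the regular subdivision of $\mathcal P$, so that the vectors adjacent to $\sigma_{\textbf{u}}$ are the $(\textbf{u}\textbf{v}_j)_1$ supplied by Proposition-Definition \ref{def-reg}, and $\tilde{\textbf{v}}_j$ denotes their image in the quotient lattice $\Lambda_{\textbf{u}}:=\ZZ^3/\ZZ\textbf{u}$, a rank-two free $\ZZ$-module since $\textbf{u}$ is primitive. (This reading is forced by the explicit value of $w_\textbf{u}$ in Example \ref{ex-dnpf}: using the original $\textbf{v}_j$ gives $(c_1,c_2,c_3)=(5,4,10)$, whereas the intended $c_j=1$ is only obtained after subdivision.) The determinant condition $\det(\textbf{u},(\textbf{u}\textbf{v}_j)_1)=1$ forces $\tilde{\textbf{v}}_j$ to be primitive in $\Lambda_{\textbf{u}}$: if $(\textbf{u}\textbf{v}_j)_1\equiv k\textbf{w}\pmod{\textbf{u}}$ with $k\geq 2$, then every $2\times 2$ minor of the matrix with columns $\textbf{u},(\textbf{u}\textbf{v}_j)_1$ would be divisible by $k$, contradicting $\det=1$. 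Via the natural isomorphism between $\Lambda_{\textbf{u}}$ and the integral lattice induced on the plane of $F_{\textbf{u}}$, this primitive vector matches the primitive outward normal $\textbf{n}_j$ of $e_j$ inside the 2-dimensional polygon $F_{\textbf{u}}$; the side on which the cone $\sigma_{\textbf{u}\textbf{v}_j}$ sits fixes the orientation.

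The lemma now reduces to the classical closure relation for convex lattice polygons: if $\textbf{d}_j$ are the primitive edge vectors of $F_{\textbf{u}}$ in cyclic order, then $\sum_j\ell_j\textbf{d}_j=0$, and a $90^\circ$-rotation inside the plane of $F_{\textbf{u}}$ takes this identity to $\sum_j\ell_j\textbf{n}_j=\sum_j\ell_j\tilde{\textbf{v}}_j=0$. Since any two distinct primitive outward normals of a convex polygon are linearly independent, this balance is unique up to a common positive scalar, so the minimal positive-integer solution is $c_j=\ell_j=r(\sigma_{\textbf{u}\textbf{v}_j})+1$.

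The main obstacle will lie in the identification of $\tilde{\textbf{v}}_j$ with the primitive outward normal $\textbf{n}_j$: pinning down the canonical lattice isomorphism $\Lambda_{\textbf{u}}\cong\ZZ^3\cap(\textbf{u}^\perp+p_0)$ (for some vertex $p_0\in F_{\textbf{u}}$) and fixing the outward orientation. Both are routine lattice-theoretic verifications, relying on the primitivity of $\textbf{u}$ in $\ZZ^3$, which holds for every strictly positive generator of the dual Newton polygons coming from the equations in Proposition \ref{prop-hyper}. Because the paper invokes ``direct calculation,'' a valid alternative is to verify the identity family-by-family on the nine RTP-types in Proposition \ref{prop-hyper}, reading $F_{\textbf{u}}$ and its edge lattice lengths off each equation exactly as in Example \ref{ex-dnpf}.
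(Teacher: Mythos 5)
Your reduction of the right-hand side to the lattice length $\ell_j$ of the edge $e_j=F_{\mathbf{u}}\cap F_{\mathbf{v}_j}$, your re-reading of Definition \ref{def-alt-r} (projecting the post-subdivision neighbours $(\mathbf{u}\mathbf{v}_j)_1$ rather than the $\mathbf{v}_j$, which is indeed forced by the value $w_{\mathbf{u}}=2$ in Example \ref{ex-dnpf}), and the identification of the class of $(\mathbf{u}\mathbf{v}_j)_1$ in $\ZZ^3/\ZZ\mathbf{u}$ with a consistently oriented primitive normal of $e_j$ (primitivity from $\det(\mathbf{u},(\mathbf{u}\mathbf{v}_j)_1)=1$, the sign fixed because $\langle(\mathbf{u}\mathbf{v}_j)_1,\cdot\rangle$ attains its minimum on $F_{\mathbf{u}}$ exactly along $e_j$) are all sound, and together with the closure relation $\sum_j\ell_j\mathbf{d}_j=0$ they prove that $(\ell_j)_j$ \emph{is} a positive integral solution of $\sum_j c_j\tilde{\mathbf{v}}_j=0$. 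This is a genuinely more conceptual route than the paper's, which establishes the lemma purely by direct computation on the listed equations.

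The gap is in the final minimality step. Pairwise linear independence of the primitive normals only bounds the relation space: for $l$ edges it has dimension $l-2$, so "unique up to a common positive scalar" is only valid for triangles. Even there, uniqueness up to scalar gives that the minimal solution is $(\ell_j)/\gcd_j\ell_j$, so you still need $\gcd(\ell_1,\ell_2,\ell_3)=1$; this holds for the RTP faces (each compact face in Table \ref{tablo1} has an edge of lattice length one), but your general argument does not supply it, and it can fail for other cubics, which is precisely why the lemma is stated only for RTP-singularities. Moreover, four-sided compact faces do occur within the stated ranges: for $B_{k-1,2\ell}$ with $\ell=k+1$ the two positive vertices coincide, $\mathbf{u}=(2k+1,2,2k+1)$, and $F_{\mathbf{u}}$ is the quadrilateral with vertices $(0,0,3)$, $(1,0,2)$, $(1,2k+1,0)$, $(0,2k+1,1)$; there the relations among the four normals form a two-dimensional space, so your uniqueness claim fails outright (the conclusion survives only because this face is a parallelogram with unit edge lengths, which your argument does not show). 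So the proof needs either a separate treatment of these configurations together with the gcd verification, or, as the paper does, a direct check over the finitely many families of Proposition \ref{prop-hyper}.
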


\begin{rem}\label{rem-nonnew} For a fan which is not a dual Newton polygon of a function, the weight formula (\ref{eq-weight}) or Definition \ref{def-alt-r} may not always yield integers. For instance, consider
the fan $\mathcal{P'}\subset \R^3$ pictured in Fig.~\ref{fig:counter}. 
\begin{figure}[h!]
		\centering
		\resizebox{182pt}{157pt}{\input{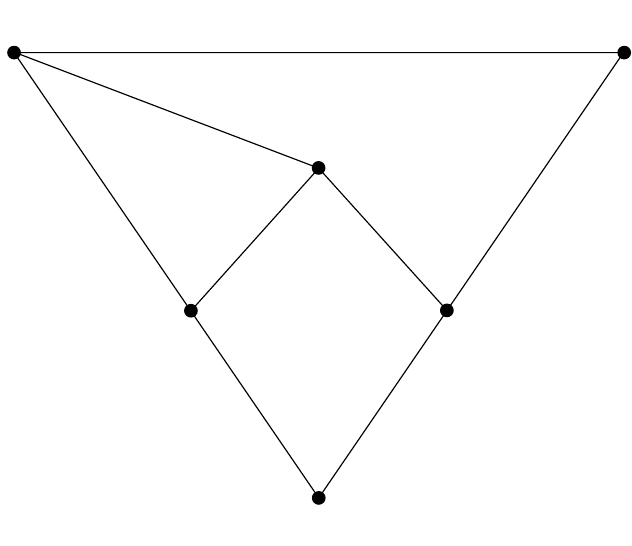_t}}
		\caption{A fan $\mathcal{P}$}
	\label{fig:counter}
	\end{figure}

\noindent Each $2-$dimensional cone in  $\mathcal{P'}$ is regular. So, the related graph $\Gamma_{\mathcal{P}'}$ consists of only one vertex. However, there is no solution to the weight formula $$w_{(3,1,1)}=\frac{\alpha_1(5,0,2)+\alpha_2(0,2,1)+\alpha_3(0,1,0)}{(3,1,1)}$$ with $w_{(3,1,1)},\alpha_1,\alpha_2,\alpha_3\in \mathbb{N}-\{0\}$. Therefore, an abstract fan does not necessarily give a resolution graph. 
\end{rem}

Recall that we have chosen a suitable projection to get the nonisolated forms in order to obtain the minimal resolution graphs for each singularity given by \cite{Ar}. We observed by some elementary but time consuming calculations that the nonisolated forms of RTP-singularities given in  Proposition \ref{prop-hyper} are all non-degenerate singularities (see Appendix A).

\begin{prop} The minimal resolution graphs of nonisolated forms of RTP-singularities defined by the equations $(i)-(ix)$ in Proposition \ref{prop-hyper} can be obtained by Oka's process followed by a number of blow-downs and coincide with Artin's classification in \cite{Ar}.
\end{prop}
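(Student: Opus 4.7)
The plan is to verify the proposition case by case for each of the nine families $(i)$--$(ix)$ listed in Proposition~\ref{prop-hyper}. For a given defining function $f(x,y,z)$ I would first read off $\operatorname{supp}(f)$, draw $NP(f)\subset M_\R$, list its compact facets, and compute the primitive inner-normal vector $\mathbf{u}\in N\cap(\mathbb{N}-\{0\})^3$ attached to each compact facet, together with the primitive normals of the non-compact facets (which in all cases will be of the form $\mathbf{e}_i$ or a vector with a zero coordinate). The resulting collection of $3$-dimensional cones gives $DNP(f)$, exactly as in Example \ref{ex-dnpf} for $E_{6,0}$. Having this fan, I would run the algorithm of Section~\ref{sect-algo}: for every $2$-dimensional cone $\sigma_{\mathbf{u}\mathbf{v}}\in DNP(f)$ form the regular subdivision via Proposition-Definition \ref{def-reg}, glue the resulting chains at the central vertex $\mathbf{u}$, and delete the endpoints corresponding to non-strictly-positive normals. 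Oka's resolution process (Definition~\ref{defn-oka}) then attaches to each vertex a weight (continued fraction for the subdivision vertices, formula (\ref{eq-weight}) for the strictly positive normals) and a genus $g(\mathbf{u})$; the numbers $r(\sigma_{\mathbf{u}\mathbf{v}})$ may be read off from Definition~\ref{def-alt-r} rather than from counting lattice points, thanks to the lemma just preceding Remark~\ref{rem-nonnew}.

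For each family the above produces an explicit weighted tree $\Gamma_f$. The next step is to justify that this $\Gamma_f$ really is a resolution graph of the nonisolated hypersurface: by the non-degeneracy of the equations $(i)$--$(ix)$ (observed in Appendix A, and compatible with Aroca's framework), the toric modification associated with the regular subdivision resolves the singularity, so Oka's process applies and $\Gamma_f$ encodes its exceptional divisor. I would then identify every vertex with self-intersection $-1$ in $\Gamma_f$ and contract it, updating the adjacent weights by the usual $(-w_i)\mapsto(-w_i+1)$ rule and concatenating chains through the contracted vertex. After finitely many such blow-downs no $(-1)$-curve remains, producing the minimal resolution graph $\Gamma_f^{\min}$. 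Finally, I would compare $\Gamma_f^{\min}$ vertex-by-vertex with the corresponding graph in Table~\ref{tablo-res}: the central $(-3)$-vertex should arise from the unique strictly positive normal attached to the only compact facet of $NP(f)$ meeting the interior of $\R_{\geq 0}^3$, while the arms of lengths $k-1$, $\ell-1$, $m-1$, etc.\ should arise from the chains of $(-2)$-curves produced by the regular subdivisions.

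The main obstacle is the amount of case-by-case bookkeeping in the families with several integer parameters ($A_{k-1,\ell-1,m-1}$, $B_{k-1,m}$ in its two parities, $C_{k-1,\ell+1}$, $D_{k-1}$, $F_{k-1}$, and the three parities of $H_n$). There the relevant strictly positive normals are functions of $k,\ell,m$, the number of cones in $DNP(f)$ and the length of each regular subdivision can grow with these parameters, and one has to verify uniformly that the continued-fraction weights evaluate to $2$ along the arms and that (\ref{eq-weight}) yields weight $3$ at the central vertex. A secondary subtlety is the choice of ``suitable'' projection used in Proposition~\ref{prop-hyper}: for $A_{k-1,\ell-1,m-1}$ with $k\geq\ell\geq m$ the additional transformation $x\mapsto x-y^k$ is needed to keep $NP(f)$ in the form that Oka's algorithm processes correctly, and one must check that without such adjustments other projections would either fail to give a cubic (cf.\ the $F_5$ projection in the remark after Proposition~\ref{prop-hyper}) or would force $\Gamma_f$ to require more $(-1)$-blow-downs than anticipated. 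Once these uniform verifications are done, the matching with Artin's list is a direct comparison of trees, and the proposition follows.
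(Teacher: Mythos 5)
Your plan is essentially the paper's own proof: the paper likewise applies Oka's process case by case (Newton polygon, dual fan, regular subdivisions, with the weights, $g(\mathbf{u})$ and $r(\sigma)$ data recorded in Table \ref{tablo1}) and then performs the required blow-downs --- one, two, or $k-\ell+1$ of them depending on the family --- before matching the result with Artin's list. The only slight idealization on your side is expecting the central vertex to come out with weight $3$ uniformly; in several families the raw Oka weight of the central vector $\mathbf{u}_1$ (or of $\mathbf{u}_3$ for $B_{k-1,2\ell}$) is $1$, and it is exactly these $(-1)$-vertices whose contraction your blow-down step, like the paper's, removes.
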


\begin{proof} Simply, apply Oka's process to the equations. See Table \ref{tablo1} for the main steps. 
Note that $g(\textbf{u})=0$ for all the series. The integers $r(\sigma)$ are all equal to $0$ except in the following cases. For the series $A_{k-1,\ell-1,m-1}$, $r(\sigma_{\mathbf{u}_1\mathbf{e}_1})=1$  if $k=\ell<m$ or if $k\geq \ell \geq m$ and $k$, $m$ are both even or odd; for $B_{k-1,2\ell}$, $r(\sigma_{\mathbf{u}_1\mathbf{e}_1})=1$; for $C_{k-1,\ell+1}$, $r(\sigma_{\mathbf{u}_1\mathbf{e}_1})=2$ if $k=3p+2$, $r(\sigma_{\mathbf{u}_2\mathbf{e}_3})=1$ if $\ell$ is even and $r(\sigma_{\mathbf{u}_2\mathbf{v}})=1$ if $\ell$ is odd; for $D_{k-1}$, $r(\sigma_{\mathbf{u}_1\mathbf{e}_1})=1$ if $k$ is even; finally, for $F_{k-1}$, $r(\sigma_{\mathbf{u}_1\mathbf{e}_1})=2$ if $k=3p$. 

Moreover, the weight of $\mathbf{u}_1$ is equal to $1$ in the following cases: $A_{k-1,\ell-1,m-1}$ if $k\geq \ell \geq m$ and $k$ is odd, $m$ is even or $k$ is even, $m$ is odd; $B_{k-1,2\ell}$ if  $k$ is odd, $\ell$ is even or $k$ is even, $\ell$ is odd; $D_{k-1}$ if $k$ is odd; $F_{k-1}$ if $k=3p+1$ or $k=3p+2$. Therefore, those require one blow-down after the process. We also find $w_{\mathbf{u}_1}=1$ for the series $C_{k-1,\ell+1}$ when $k=3p$ or $k=3p+1$. However, one needs to apply two successive blow-downs. On the other hand, $w_{\mathbf{u}_3}=1$ for $B_{k-1,2\ell}$ in which case one needs $k-\ell+1$ successive blow-downs to find  the minimal resolution graph. This concludes the proof.
\end{proof}

Let $S$ be an RTP-singularity defined by the maximal minors of one of the matrices in Table \ref{Tlist} and $X$ its nonisolated form given in Proposition \ref{prop-hyper}. Let $\tilde{X}$ be the resolution of $X$ by Oka's process. Then, by contracting the $(-1)$-curves on $\tilde{X}$ using Castelnouva criterion we get the minimal resolution $\tilde{S}$ of $S$. Therefore, we have the commutative diagram
\begin{equation}\xymatrixcolsep{5pc}\xymatrix{
 \tilde{S}  \ar[r]^{\pi} & S \ar[r]^p & X  \\
\tilde{X} \ar[rru]_\sigma \ar[u]^e &&
}
\end{equation}
where $\pi$ is the minimal resolution by successive blow-ups, $p$ is the projection which is also  a normalisation, $e$ is the contraction map and $\sigma$ is the resolution of $X$ obtained by Oka's process.


As one can expect, many cubic equations may give the same Newton polygon. However, they may not come from a projection of an RTP-singularity.
 For example, the hypersurface 
\begin{equation}\label{deg-cubic} z^3+xz^2-y^{2k+1}z-xy^{2k+1}=0 \end{equation}
 has the same Newton polygon as $B_{k-1,2k+2}$ but its normalisation is smooth; moreover, it is degenerate (see Example \ref{ex-deg}). Therefore it is not an isolated form of an RTP-singularity.

\newpage
 	\begin{longtable} {>{\centering\arraybackslash}m{1.8cm}>{\centering\arraybackslash}m{5cm}>{\centering\arraybackslash}m{5.75cm}>{\centering\arraybackslash}m{1.8cm}}
	\caption{Resolution process of the RTP-singularities.}
\label{tablo1}\\
		\hline 
		RTP 	& $NP(f)$ 	&  a subdivision of $DNP(f)$ & Conditions	
		\\ \hline	
		\endfirsthead
		\multicolumn{4}{c}%
{{\tablename\ \thetable{}. -- continued from previous page}} \\
 \hline 
 RTP 	& $NP(f)$ 	&  a subdivision of $DNP(f)$ & Conditions	 
		\\\hline
 \endhead
 \hline\multicolumn{4}{r}{{Continued on next page}} \\ 
 \endfoot
\hline 
\endlastfoot
 		\noalign{\smallskip}
		$A_{k-1,\ell-1,m-1}$ 		
		&\resizebox{142pt}{101pt}{\input{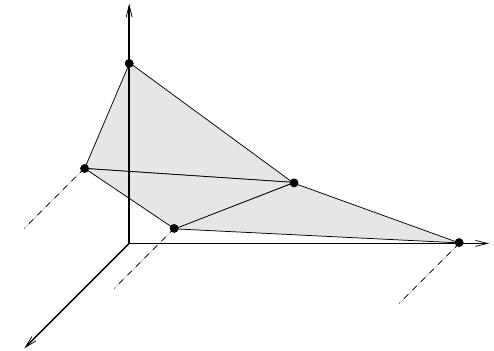_t}}		
		& \resizebox{129pt}{101pt}{\input{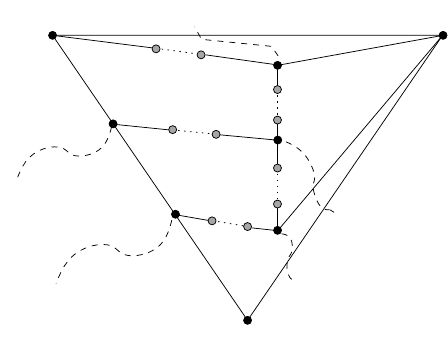_t}}
		& \begin{small}$\natural$\end{small}	
		\cr
		\cline{1-4} \noalign{\smallskip}
		$A_{k-1,\ell-1,m-1}$		
		&\resizebox{144pt}{101pt}{\input{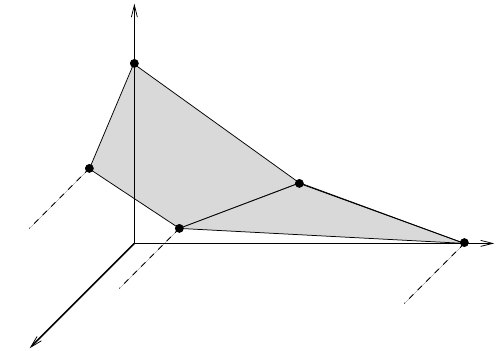_t}}	
		& \resizebox{137pt}{101pt}{\input{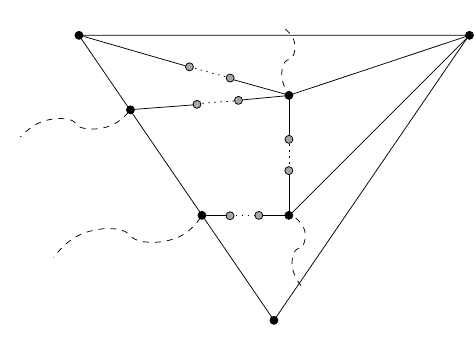_t}}
		& \begin{small}$\sharp$\end{small}
		\\
		\hline\noalign{\smallskip}
		$B_{k-1,2\ell}$		
		&\resizebox{142pt}{101pt}{\input{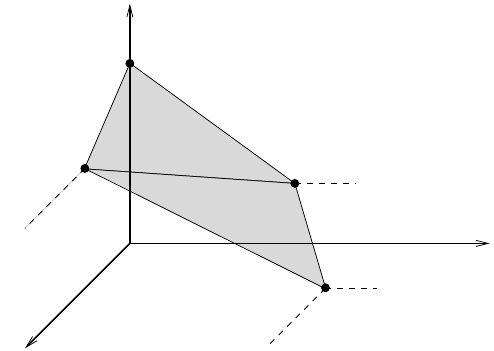_t}}			
		& \resizebox{127pt}{101pt}{\input{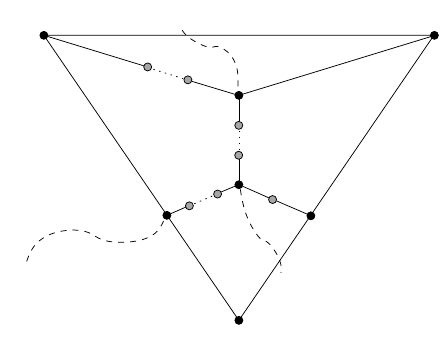_t}}	
		& \begin{small}$\ell\leq k+1$ \& $\flat_1$ \end{small}
		\\
		\cline{1-4}\noalign{\smallskip}
		$B_{k-1,2\ell-1}$	
		&\resizebox{142pt}{101pt}{\input{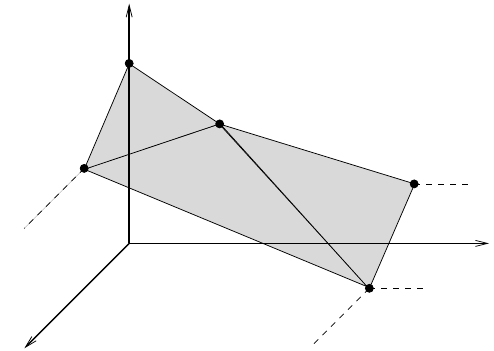_t}}		
		& \resizebox{138pt}{101pt}{\input{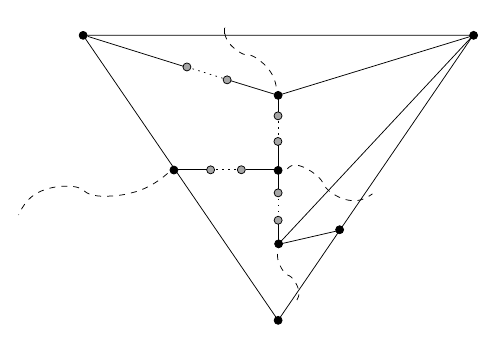_t}}			
		& \begin{small}$\ell\leq k+1$ \& $\flat_2$ \end{small}
		\\
	\hline
\multicolumn{4}{p{15cm}}{\begin{footnotesize}$\natural$: $k\geq \ell \geq m$, $\beta=\ell-m-1$; $ \alpha=\frac{k+m-2}{2}$, , $\gamma=\frac{k-m-2}{2}$ if $k, m$ both even or odd; $\alpha=\frac{k+m-1}{2}$, $\gamma=\frac{k-m-1}{2}$ if $k$ odd, $m$ even or $k$ even $m$ odd.\end{footnotesize}} \cr
\multicolumn{4}{p{15cm}}{\begin{footnotesize}$\sharp$: $k=\ell<m$, $\beta=k-m-1$.\end{footnotesize}}\cr
\multicolumn{4}{p{15cm}}{\begin{footnotesize}$\flat_1$: $\alpha=\frac{k+\ell-2}{2}$, $\gamma=\frac{k-\ell}{2}$ if $k$, $\ell$ both even or odd; $\alpha=\frac{k+\ell-1}{2}$ and $\gamma=\frac{k-\ell+1}{2}$ if $k$ odd, $\ell$ even or $k$ even $m$ odd.\end{footnotesize}}\cr
\multicolumn{4}{p{15cm}}{\begin{footnotesize}$\flat_2$: $\alpha=2k-\ell+2$, $\beta=k-\ell+1$.\end{footnotesize}}
	\\
	\hline\noalign{\smallskip}
		$C_{k-1,\ell+1}$		
		&\resizebox{145pt}{101pt}{\input{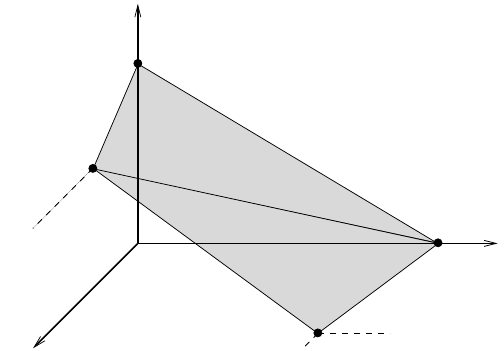_t}}		
		& \resizebox{124pt}{101pt}{\input{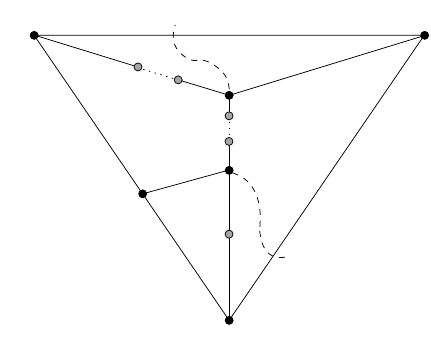_t}}		
		&  \begin{small}$\ell$ even \& $\clubsuit$\end{small}
		\\
		\cline{1-4}\noalign{\smallskip}
		$C_{k-1,\ell+1}$			
		&\resizebox{145pt}{101pt}{\input{NpolyCi.pdf_t}}	
		&  \resizebox{124pt}{101pt}{\input{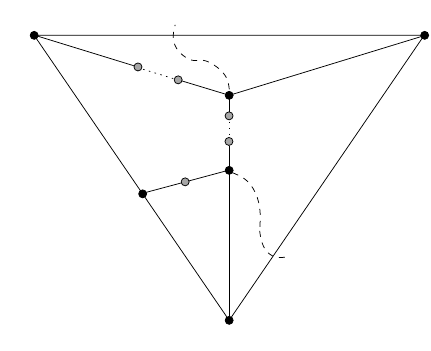_t}}		
		& \begin{small}$\ell$ odd \& $\clubsuit$\end{small}
		\\\hline \noalign{\smallskip}
		$D_{k-1}$		
		&\resizebox{145pt}{101pt}{\input{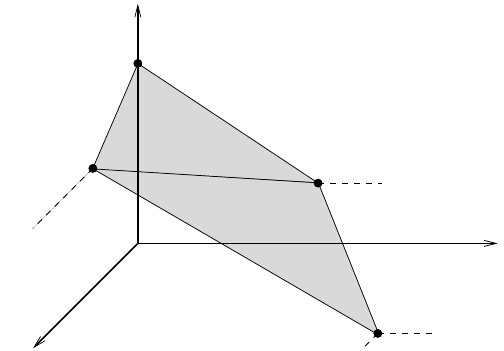_t}}	
		&  \resizebox{128pt}{101pt}{\input{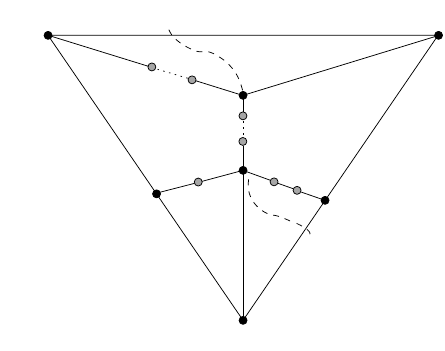_t}}
		& \begin{small}$\spadesuit$\end{small}
		\\ 	\hline\noalign{\smallskip}
		$E_{6,0}$		
		&\resizebox{136pt}{101pt}{\input{NpolyE60.pdf_t}}	
		&  \resizebox{129pt}{101pt}{\input{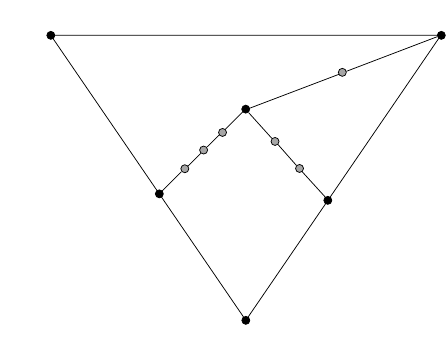_t}}		& 
		\\	\hline \noalign{\smallskip}
		$E_{0,7}$		
		&\resizebox{136pt}{101pt}{\input{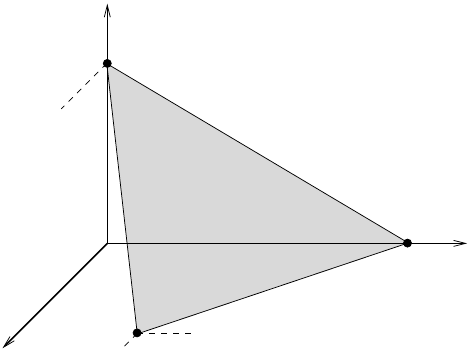_t}}	
		&  \resizebox{129pt}{101pt}{\input{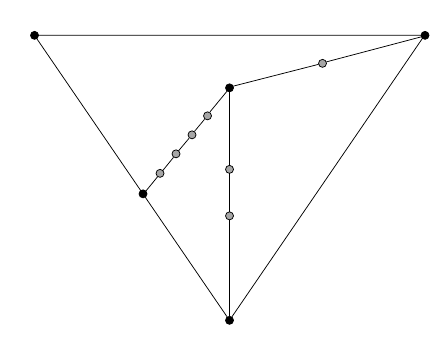_t}}		&
		\\
		\hline
\multicolumn{4}{p{15cm}}{\begin{footnotesize}$\clubsuit$: $\gamma=p+\ell-1$ if $k= 3p+1$; Ê$\gamma=p+\ell-2$ if $k= 3p+2$ or $k=3p$.\end{footnotesize}}\\
\multicolumn{4}{p{15cm}}{\begin{footnotesize}$\spadesuit$: $\gamma=p$ if $k= 2p$; $\gamma=p+1$ if $k= 2p+1$.\end{footnotesize}}
	\\
	\noalign{\smallskip}
	$E_{7,0}$		
	&\resizebox{140pt}{101pt}{\input{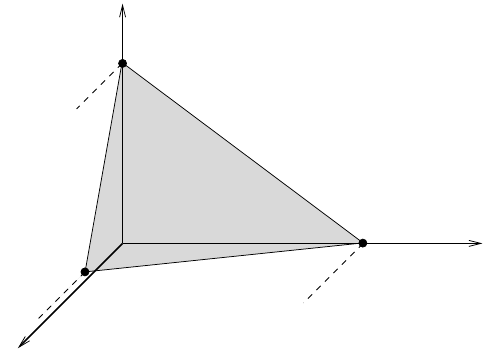_t}}	
	&  \resizebox{117pt}{101pt}{\input{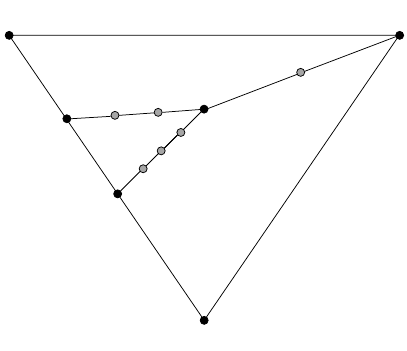_t}}	
	&
	\\
	\hline\noalign{\smallskip}
	$F_{k-1}$		
	&\resizebox{142pt}{101pt}{\input{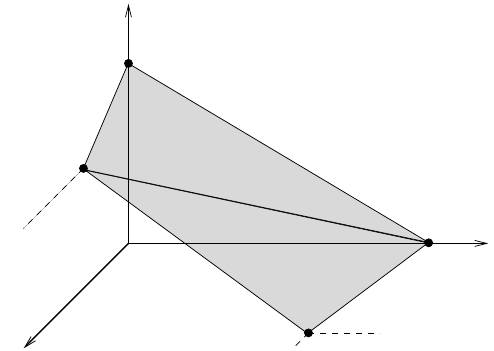_t}}	
	&  \resizebox{124pt}{101pt}{\input{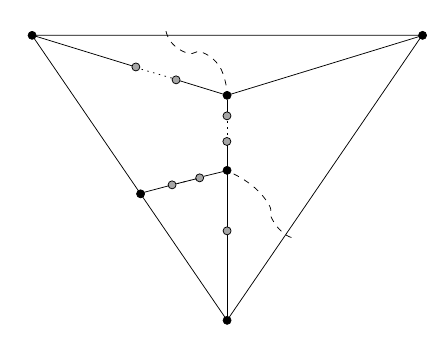_t}}
	&$\diamondsuit$
	\\
\hline\noalign{\smallskip}
	$H_{3k-1}$	
	&\resizebox{135pt}{105pt}{\input{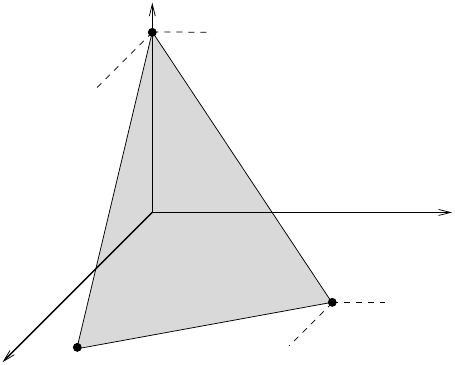_t}}	
	&  \resizebox{127pt}{101pt}{\input{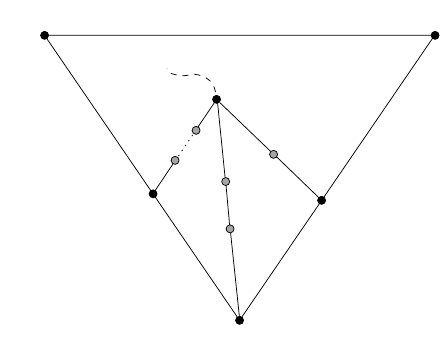_t}}		
	&
	\\
	\cline{1-4}\noalign{\smallskip}
	$H_{3k}$	
	&\resizebox{135pt}{105pt}{\input{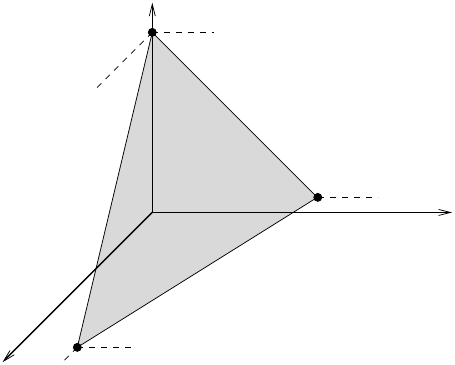_t}}	
	&  \resizebox{129pt}{101pt}{\input{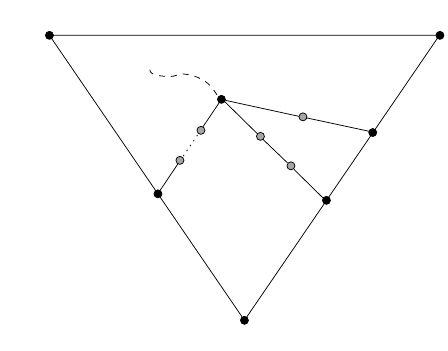_t}}
	&
	\\
	\cline{1-4}\noalign{\smallskip}
	$H_{3k+1}$	
	&\resizebox{135pt}{105pt}{\input{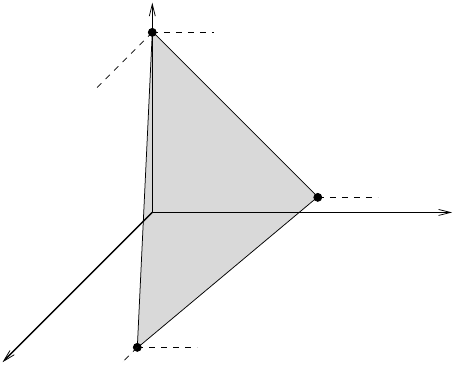_t}}	
	&  \resizebox{127pt}{101pt}{\input{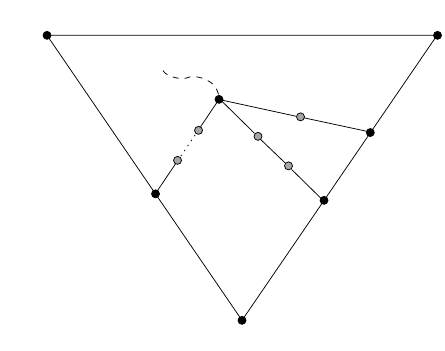_t}}		
	&
	\\
	\hline
	\multicolumn{4}{p{14cm}}{\begin{footnotesize}$\diamondsuit$: $\gamma=p+2$ if $k= 3p+3$ or $k=3p+1$; $\gamma=p+3$ if $k= 3p+2$.\end{footnotesize}}
	\end{longtable}

\newpage
\appendix
\section{Newton non-degeneracy of RTP-singularities}

\begin{defn}[\cite{oka}]\label{defn:nondeg}
An analytic function $f(\textbf{z}) = \sum_\textbf{v} a_\textbf{v} \textbf{z}^\textbf{v}$ in $\mathcal{O}_{\mathbb{C}^n,0}$ is  non-degenerate with respect to its Newton polyhedron in coordinates 
$(z_1,\ldots,z_n)$ or shortly \textit{Newton non-degenerate}, if for all compact faces $F_\textbf{u}$ associated to a non-zero vector $\textbf{u}$ of $NP(f)$, 
the \textit{face function} or the \textit{initial form}
$\textnormal{In}_{\textbf{u}}(f):= \sum_{\textbf{v}\in F_\textbf{u}}{a_\textbf{v} \textbf{z}^\textbf{v}}$
defines a nonsingular hypersurface in the torus $(\mathbb{C}^*)^n$.

\end{defn}
More generally, for an ideal  $I$ in $\mathcal{O}_{\mathbb{C}^n,0}$, the \textit{initial ideal} is given by $\textnormal{In}_\textbf{u}(I)= \langle \textnormal{In}_\textbf{u}(f) | f\in I \rangle$. And,

\begin{defn}[\cite{aroca}]
An affine variety $V(I)\subseteq \CC^n $ is said to be \textit{Newton non-degenerate} 
if for every $\textbf{u} \in \R_{\geq0}^n$, $V(\textnormal{In}_{\textbf{u}}I)$  does not have any singularity in $(\CC^*)^n$.
\end{defn} 

\begin{ex}\label{ex-nondeg}
 Consider the RTP-singularity $V(F_{k-1})$ defined by  Proposition \ref{prop-hyper} (v). The $NP(f)$ has two compact faces ${F_{\textbf{u}_1}}$ and ${F_{\textbf{u}_2}}$(see Table~\ref{tablo1}).
  Then the face functions of
$f$ are
 $$\textnormal{In}_{\textbf{u}_1}= z^3+xz^2+y^{2k+3}, \ \
 \textnormal{In}_{\textbf{u}_2}= xz^2+x^2y^{2k}+y^{2k+3}.$$
 The Jacobian ideals are
 \begin{eqnarray*} J(\textnormal{In}_{\textbf{u}_1})&=&(2xz,(2k+3)y^{2k+2},3z^2+2xz), \\
 J(\textnormal{In}_{\textbf{u}_2})&=&(z^2+2xy^{2k},(2k+3)y^{2k+2}+(2k)x^2y^{2k-1},2xz).\end{eqnarray*}
Clearly, none of $\textnormal{In}_{\textbf{u}_1}$ and $\textnormal{In}_{\textbf{u}_2}$ has a solution in $(\CC^*)^3$. Hence $F_{k-1}$ is non-degenerate.
\end{ex}

\begin{ex}\label{ex-deg} The hypersurface given by (\ref{deg-cubic}) is degenerate. Its Newton polygon has one compact face. Therefore, the degeneracy of the hypersurface follows from the fact that the Jacobian ideal, which is given by
$$(z^2-y^{2k+1}, y^{2k}z-x^{2k}, 3z^2+2xz-y^{2k+1}),$$ 
has a solution in the torus.
\end{ex}

\begin{rem}
Non-degeneracy of a hypersurface singularity in $\mathbb{C}^3$ can be checked by simple calculations as  in Example \ref{ex-nondeg}. However, 
it might be very useful to work with computer programs such as  GFAN (\cite{gfan}) and Singular (\cite{Sing}) when one studies ideals in higher dimensions. 
In the following example, we give an explicit computation of one of the equations given by Tjurina (see Table~\ref{Tlist}).
\end{rem}

\begin{rem} An explicit computation shows that all of the nonisolated forms of RTP-singularities listed in Proposition \ref{prop-hyper} are Newton non-degenerate. 
\end{rem}

In fact,  

\begin{thm}\label{thm-nondeg} The nonisolated forms of RTP-singularities and their normalisations are Newton non-degenerate.
\end{thm}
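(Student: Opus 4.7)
The plan is to verify Newton non-degeneracy directly from the definition, proceeding family by family through the nine types listed in Table \ref{Tlist}.

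For the nonisolated forms, I would follow the template of Example \ref{ex-nondeg} verbatim. For each equation $f\in\mathcal{O}_{\mathbb{C}^3,0}$ in Proposition \ref{prop-hyper}, the compact faces of $NP(f)$ are already recorded in the middle column of Table \ref{tablo1} (each $f$ has at most two compact $2$-faces). For each compact face $F_{\textbf{u}}$, one writes down the face function $\textnormal{In}_{\textbf{u}}(f)$ by retaining only the monomials lying on $F_{\textbf{u}}$, and checks that the system $\textnormal{In}_{\textbf{u}}(f) = \partial_x \textnormal{In}_{\textbf{u}}(f) = \partial_y \textnormal{In}_{\textbf{u}}(f) = \partial_z \textnormal{In}_{\textbf{u}}(f) = 0$ has no solution in $(\mathbb{C}^*)^3$. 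In every case the Jacobian ideal is seen to contain pure monomials (as in Example \ref{ex-nondeg}, where $J(\textnormal{In}_{\textbf{u}_1})$ contains $xz$ and $y^{2k+2}$), which immediately rules out singular points in the torus. This disposes of the first assertion.

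For the normalisations the statement is stronger: one must verify, for the codimension-two ideal $I=(F,G,H)$ generated by the maximal minors of Miranda's matrix, that $V(\textnormal{In}_{\textbf{u}}(I))$ is smooth in $(\mathbb{C}^*)^4$ for \emph{every} $\textbf{u}\in\mathbb{R}^4_{\geq 0}$. I would first reduce to a finite check by invoking the Gr\"obner fan of $I$: $\textnormal{In}_{\textbf{u}}(I)$ depends only on which maximal cone of this fan contains $\textbf{u}$, and the cones can be enumerated using \texttt{GFAN}. For each cone, one extracts a tropical basis of $\textnormal{In}_{\textbf{u}}(I)$, applies the Jacobian criterion to this presentation, and verifies that the $2\times 2$-minors of the resulting Jacobian matrix generate the unit ideal in the localisation $\mathcal{O}_{\mathbb{C}^4}[x^{-1}y^{-1}z^{-1}w^{-1}]$. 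A useful shortcut in many cases is that the three generators $F,G,H$ themselves already form a tropical basis, so the initial ideal is simply $(\textnormal{In}_{\textbf{u}}F, \textnormal{In}_{\textbf{u}}G, \textnormal{In}_{\textbf{u}}H)$ and the Jacobian check is immediate.

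The main obstacle is the presence of the infinite parametric families $A_{k-1,\ell-1,m-1}$, $B_{k-1,m}$, $C_{k-1,\ell+1}$, $D_{k-1}$, $F_{k-1}$ and $H_n$: a single computer run at fixed numerical indices is not enough, so one must show that the Gr\"obner fan stabilises after finitely many parameter residues (paralleling the case split in Table \ref{tablo1} according to parity of $k, \ell, m$ and congruences modulo $3$) and then carry out the Jacobian verification symbolically in each stabilised pattern. For the three exceptional types $E_{6,0}$, $E_{0,7}$ and $E_{7,0}$ this difficulty disappears and the proof reduces to a finite explicit computation that can be run once in \texttt{Singular}.
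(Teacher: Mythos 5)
Your proposal is correct and follows essentially the same route as the paper: the theorem is established there only by direct case-by-case verification --- the facet-by-facet Jacobian check of Example \ref{ex-nondeg} for the hypersurface forms, and initial-ideal computations (with the aid of GFAN/Singular) for the determinantal normalisations in $\mathbb{C}^4$ --- which is exactly what you describe, your Gr\"obner-fan/tropical-basis organisation and parameter-stabilisation step merely making explicit the uniform-in-$(k,\ell,m)$ bookkeeping that the paper leaves implicit. The only slip is cosmetic: some of the nonisolated forms (e.g.\ the $A$ and $B$ series in Table \ref{tablo1}) have three compact facets rather than ``at most two'', so the facet list must be read off case by case before running the Jacobian check.
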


Hence we are inspired to suggest the following conjecture.

\begin{conj}
A normal surface singularity is Newton non-degenerate if and only if it is a normalisation of a nonisolated non-degenerate hypersuface singularity. \end{conj}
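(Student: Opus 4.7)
The plan is a case-by-case verification for the nine families (i)--(ix) of Proposition \ref{prop-hyper} and for the ideals in Table \ref{Tlist} that define their normalisations. In each case I follow the template of Example \ref{ex-nondeg}: for every compact face $F_{\mathbf{u}}$ of the Newton polygon I write down the face function, compute its Jacobian ideal, and show that no common zero lies in the torus.

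First I would handle the hypersurface side. The compact faces of $NP(f)$ for each family are already recorded in Table \ref{tablo1}, so the procedure is mechanical: for each face the initial form $\operatorname{In}_{\mathbf{u}}(f)$ is a quasi-homogeneous polynomial with three or four monomials, and a direct inspection of its three partial derivatives exhibits a monomial whose vanishing on $(\mathbb{C}^*)^3$ would force one of the coordinates to be zero. For instance, every compact face containing the vertex $(0,0,3)$ satisfies $\partial_z \operatorname{In}_{\mathbf{u}}(f) = 3z^2 + (\text{lower-order terms in } z)$, and smoothness on the torus then follows after a finite check on the remaining coordinates. One has to be careful to subdivide according to the parity and residue-mod-$3$ conditions recorded in the footnotes of Table \ref{tablo1}.

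Next I would handle the normalisations. For each matrix in Miranda's form of Table \ref{Tlist}, the ideal $I$ is generated by three quadrics of the shape \eqref{defY}. For a weight vector $\mathbf{u} \in \mathbb{R}_{\geq 0}^{4}$ the initial ideal $\operatorname{In}_{\mathbf{u}}(I)$ is computed from the $\mathbf{u}$-initial forms of a Gr\"obner basis, and the distinct initial ideals are indexed by the finitely many maximal cones of the Gr\"obner fan. For each such cone one verifies by direct calculation that $V(\operatorname{In}_{\mathbf{u}}(I))$ is smooth in $(\mathbb{C}^*)^{4}$; as already suggested in the remark preceding the theorem, this bookkeeping is best carried out with \texttt{GFAN} and \texttt{Singular}.

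The main obstacle is the combinatorial volume rather than any intrinsic difficulty. For the families $A_{k-1,\ell-1,m-1}$, $B_{k-1,m}$, $C_{k-1,\ell+1}$, $D_{k-1}$ and $F_{k-1}$ both the shape of $NP(f)$ and the Gr\"obner fan of $I$ split into sub-cases according to arithmetic conditions on the parameters, producing a sizeable tree of sub-computations. A convenient way to keep the argument manageable is to prove a uniform lemma covering every compact face that contains the vertex $(0,0,3)$, since these faces account for the majority of sub-cases in a parameter-independent way, and then to treat the remaining lateral faces (supported near a coordinate plane) individually.
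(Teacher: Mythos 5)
There is a fundamental mismatch between what you prove and what the statement asserts. The statement is a \emph{conjecture} about \emph{all} normal surface singularities: an arbitrary normal surface singularity is Newton non-degenerate if and only if it is a normalisation of a nonisolated non-degenerate hypersurface singularity. Your plan is a finite, case-by-case verification for the nine RTP families of Proposition \ref{prop-hyper} and their matrix presentations in Table \ref{Tlist}. That computation is essentially the content of Theorem \ref{thm-nondeg} (the nonisolated forms of RTP-singularities and their normalisations are Newton non-degenerate), which the paper presents as the \emph{evidence motivating} the conjecture, not as a proof of it; the paper itself offers no proof, which is precisely why the statement is labelled a conjecture.

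Concretely, neither direction of the biconditional can be reached by your method. For the ``if'' direction you would need to show that the normalisation of an \emph{arbitrary} nonisolated Newton non-degenerate hypersurface singularity in $\mathbb{C}^3$ admits an embedding in which it is Newton non-degenerate; for the ``only if'' direction you would need to produce, for an \emph{arbitrary} Newton non-degenerate normal surface singularity, a (generic or suitable) projection to a nonisolated hypersurface and prove that this image is non-degenerate. Both require structural arguments about how non-degeneracy interacts with normalisation and with projection (e.g.\ via the toric resolutions of \cite{aroca} and \cite{oka}), valid beyond any finite list; checking the RTP equations, however carefully organised by the parity and mod-$3$ sub-cases of Table \ref{tablo1} or automated with \texttt{GFAN} and \texttt{Singular}, only adds finitely many examples and cannot close either implication. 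If your goal is Theorem \ref{thm-nondeg}, your template following Example \ref{ex-nondeg} is the right kind of argument; as a proof of the conjecture it does not start.
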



\section{Tranformations used in Table \ref{Tlist}}
Here we list the diffeomorphisms we apply (in the given order) to transform Tjurina's matrix form of RTP-singularities into  Miranda's given by (\ref{mirmat2}). 

\begin{alignat*}{2}
&A_{k-1,\ell-1,m-1}:  & & \cr
&  & \mathbf{(1)}\  (x,y,z,w)&\mapsto (x,y,z,w-y^\ell) \cr
&  & \mathbf{(2)}\  (x,y,z,w)&\mapsto (x-z-w,y,z,w) \cr
& &  \mathbf{(3)}\  (x,y,z,w)& \mapsto (x,y,z,w+x+y^k+z)   \cr
&B_{k-1,2\ell}: && \cr
& &\mathbf{(1)}\   (x,y,z,w)&\mapsto  (x-z,y,z,w)
\cr
&B_{k-1,2\ell-1}: && \cr
& &\mathbf{(1)}\   (x,y,z,w)&\mapsto  (x-z+y^{\ell-1},y,z,w) \cr
&C_{k-1,\ell+1}: & &\cr
& &\mathbf{(1)}\   (x,y,z,w)&\mapsto (x-z,y,z,w) \cr
&&\mathbf{(2)}\   (x,y,z,w)&\mapsto \left(x,y,z,w- (\ell x^{\ell-1}+{\ell \choose 2}x^{\ell -1}z+\cdots +{\ell \choose \ell-1} xz^{\ell -2}+z^{\ell-1}\right )
\cr
& & \mathbf{(3)}\  (x,y,z,w)&\mapsto \left (x,y,z, (1-{\ell \choose 3}x^{\ell-3}y^{2k})w \right)
\cr
& D_{k-1} \textnormal{ and } F_{k-1}: && \cr
&& \mathbf{(1)}\  (x,y,z,w)&\mapsto (x-z,y,z,w) \cr 
& & \mathbf{(2)}\  (x,y,z,w)& \mapsto (x,y,z,w-2xy^k-y^kz)\cr
& H_{3k+1}: & &\cr
& &\mathbf{(1)}\  (x,y,z,w)&\mapsto (x,y,z,w+y^k) \cr
& E_{6,0}:  &&\cr
& & \mathbf{(1)}\   (x,y,z,w)&\mapsto (x,y,z,w+y^2) \cr
& E_{7,0}: &&\cr
& & \mathbf{(1)}\   (x,y,z,w)&\mapsto (x,y,z,w+x^2).
\end{alignat*}
Note that additional row and column operations may be needed in some of the cases.


\end{document}